\newtheorem{theorem}{Theorem}
\newtheorem{lemma}{Lemma}
\newtheorem{false statement}{False statement}
\theoremstyle{definition}
\newtheorem{claim}{Claim}
\newtheorem{subclaim}{Claim}[claim]
\newtheorem{problem}{Problem}
\newtheorem{case}{Case}
\newtheorem{subcase}{Subcase}[case]
\newtheorem{subsubcase}{Subsubcase}[subcase]
\newcounter{mathitem}
  {\begin{list}{{$(\roman{mathitem})$}}{
   \setcounter{mathitem}{0}
   \usecounter{mathitem}
   \setlength{\topsep}{0pt plus 2pt minus 0pt}
   \setlength{\parskip}{0pt plus 2pt minus 0pt}
   \setlength{\partopsep}{0pt plus 2pt minus 0pt}
   \setlength{\parsep}{0pt plus 2pt minus 0pt}
   \setlength{\leftmargin}{35pt}
   \setlength{\itemsep}{0pt plus 2pt minus 0pt}}}
  {\end{list}}
\begin{document}
\title{\bf\Large Pairs of Fan-type heavy subgraphs for pancyclicity of 2-connected graphs}

\date{}

\author{Bo Ning\thanks{Email-address: ningbo\_math84@mail.nwpu.edu.cn (B. Ning).}\\[2mm]
\small Department of Applied Mathematics, School of Science,\\
\small Northwestern Polytechnical University, Xi'an, Shaanxi 710129, P.R.~China\\}

\maketitle

\begin{abstract}
A graph $G$ on $n$ vertices is Hamiltonian if it contains a spanning cycle,
and pancyclic if it contains cycles of all lengths from 3 to $n$. In 1984,
Fan presented a degree condition involving every pair of vertices at distance
two for a 2-connected graph to be Hamiltonian. Motivated by Fan's result,
we say that an induced subgraph $H$ of $G$ is $f_1$-heavy if for every
pair of vertices $u,v\in V(H)$, $d_{H}(u,v)=2$ implies
$\max\{d(u),d(v)\}\geq (n+1)/2$. For a given graph $R$, $G$ is called
$R$-$f_1$-heavy if every induced subgraph of $G$ isomorphic to $R$ is
$f_1$-heavy. In this paper we show that for a connected graph $S$ with
$S\neq P_3$ and a 2-connected claw-$f_1$-heavy graph $G$ which is not
a cycle, $G$ being $S$-$f_1$-heavy implies $G$ is pancyclic if $S=P_4,Z_1$
or $Z_2$, where claw is $K_{1,3}$ and $Z_i$ is the path
$a_1a_2a_3\ldots a_{i+2}a_{i+3}$ plus the edge $a_1a_3$.
Our result partially improves a previous theorem
due to Bedrossian on pancyclicity of 2-connected graphs.
\medskip

\noindent {\bf Keywords:} Fan-type heavy subgraph; Hamilton cycle;
Cycle; Pancyclicity
\smallskip

\noindent {\bf AMS Subject Classification (2010):} 05C38, 05C45
\end{abstract}

\section{Introduction}
We use Bondy and Murty \cite{Bondy_Murty} for terminology and
notation not defined here and consider simple graphs only.

Let $G$ be a graph and $H$ be a subgraph. Let $x,y$ be two
vertices of $V(H)$. An $(x,y)$-\emph{path} in $H$ is a path
$P$ connecting $x$ and $y$ in $H$.
The \emph{distance} between $x$ and $y$ in $H$, denoted by $d_H(x,y)$, is the
length of a shortest $(x,y)$-path in $H$. When there is no
danger of ambiguity, we use $d(x,y)$ instead of $d_G(x,y)$.

Let $G$ be a graph on $n$ vertices. For a given graph $R$,
$G$ is called $R$-\emph{free} if $G$ contains no induced
subgraph isomorphic to $R$, and \emph{$R$-$f_i$-heavy}
if for every induced subgraph $H$ of $G$ isomorphic to
$R$ and every pair of vertices $u,v\in V(H)$, $d_{H}(u,v)=2$
implies that $\max\{d(u),d(v)\}\geq (n+i)/2$. For a family
$\mathcal{R}$ of graphs, $G$ is called \emph{$\mathcal{R}$-free}
(\emph{$\mathcal{R}$-$f_i$-heavy}) if $G$ is $R$-free
($R$-$f_i$-heavy) for each $R\in\mathcal{R}$. In particular,
similar as in \cite{Ning_Zhang}, we use $R$-$f$-heavy
($\mathcal{R}$-$f$-heavy) instead of $R$-$f_0$-heavy
($\mathcal{R}$-$f_0$-heavy). Note that every
$\mathcal{R}$-free graph is also $\mathcal{R}$-$f_1$-heavy
($\mathcal{R}$-$f$-heavy).

The bipartite graph $K_{1,3}$ is called the \emph{claw}. We
say that its (only) vertex of degree 3 is the \emph{center}
and the other vertices are its \emph{end vertices}.
In this paper, we use the terminology claw-$f_1$-heavy
instead of $K_{1,3}$-$f_1$-heavy.

A graph $G$ on $n$ vertices is said to be \emph{Hamiltonian}
if it contains a \emph{Hamilton cycle}, i.e., a cycle containing
all vertices of $G$, and \emph{pancyclic} if $G$ contains
cycles of all lengths from 3 to $n$. Bedrossian \cite{Bedrossian}
completely characterized all the pairs of forbidden subgraphs
for a 2-connected graph to be Hamiltonian and to be pancyclic.

\begin{theorem}[Bedrossian \cite{Bedrossian}]\label{th1}
Let $R$ and $S$ be connected graphs with $R,S\neq P_{3}$
and let $G$ be a $2$-connected graph. Then $G$ being $\{R,S\}$-free
implies $G$ is Hamiltonian if and only if (up to symmetry)
$R=K_{1,3}$ and $S=P_4,P_5,P_6,C_3,Z_1,Z_2,B,N$ or $W$
(see Figure \ref{fi1}).
\end{theorem}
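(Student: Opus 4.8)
The statement is a biconditional, so two implications must be established: \emph{necessity} --- no pair outside the stated list forces Hamiltonicity --- and \emph{sufficiency} --- every listed pair does. The organising principle is monotonicity under induced subgraphs: if $S'$ is an induced subgraph of $S$, every $S'$-free graph is $S$-free, so ``$\{K_{1,3},S\}$-free $\Rightarrow$ Hamiltonian'' implies ``$\{K_{1,3},S'\}$-free $\Rightarrow$ Hamiltonian''. Hence for sufficiency it suffices to treat the \emph{maximal} members of the list --- $P_6,Z_2,N,W$ (since $P_4,P_5$ embed in $P_6$, $C_3,Z_1$ in $Z_2$, and $B$ in $N$) --- and dually, for necessity it suffices to show every admissible $S$ embeds as an induced subgraph into one of $P_6,Z_2,N,W$.

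\emph{Necessity.} One must exhibit, for each pair $\{R,S\}$ not of the claimed form, a $2$-connected non-Hamiltonian $\{R,S\}$-free graph. I would proceed in two stages. First, since the graphs $K_{m,m+1}$ are $2$-connected, non-Hamiltonian, and have only complete bipartite induced subgraphs, one of $R,S$ --- say $R$ --- must be complete bipartite, $R=K_{a,b}$; and using the graphs $K_t\vee\overline{K_{t+1}}$ (which are $2$-connected, $P_4$-free since cographs, $C_4$-free, and non-Hamiltonian since their independence number exceeds half their order), the Petersen graph (cubic, girth $5$, $2$-connected, non-Hamiltonian), the graph $K_{2,3}$ (triangle-free, $P_4$-free, maximum degree $3$, $2$-connected, non-Hamiltonian), and one small claw-free $2$-connected non-Hamiltonian graph, a short case analysis on $(a,b)$ shows $R=K_{1,3}$, the cases $K_{1,1}$ (degenerate) and $K_{1,2}=P_3$ (excluded by hypothesis) aside. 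Second, with $R=K_{1,3}$ fixed, I must show every admissible $S$ embeds into $P_6,Z_2,N$ or $W$; for this I would use a short explicit family of $2$-connected \emph{claw-free} non-Hamiltonian graphs --- the prototype being the ``theta-type'' graph formed from two disjoint cliques $K_m$ ($m\ge 3$) by adding three new vertices of degree two, the $i$-th adjacent to the $i$-th vertex of each clique, which is claw-free, $2$-connected, and non-Hamiltonian because any Hamilton cycle would use all three edges of the odd edge-cut separating the cliques --- together with a handful of further small such graphs. Since every admissible $S$ must be an induced subgraph of each of these, computing their common induced subgraphs pins $S$ down to the listed graphs; this finite verification is the bookkeeping-heavy heart of the necessity argument.

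\emph{Sufficiency.} Fix a maximal $S\in\{P_6,Z_2,N,W\}$ and a $2$-connected $\{K_{1,3},S\}$-free graph $G$, and suppose for contradiction that $G$ is not Hamiltonian. Let $C=v_1v_2\cdots v_kv_1$ be a longest cycle of $G$, with indices modulo $k$; then $k<|V(G)|$, and since $G$ is connected there is a vertex $x\notin V(C)$ with a neighbour on $C$, which by $2$-connectivity can be chosen so that the component of $G-V(C)$ containing it has at least two neighbours on $C$. The maximality of $C$ yields two standard facts: if $v_i\in N(x)$ then $x\notin N(v_{i-1})\cup N(v_{i+1})$ (else $C$ could be lengthened through $x$), and hence $v_{i-1}v_{i+1}\in E(G)$, since otherwise $\{v_i; x,v_{i-1},v_{i+1}\}$ would induce a claw. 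Iterating this around the neighbours of $x$ on $C$ and analysing the arcs of $C$ between consecutive such neighbours, one builds a rigid local configuration around $C$; the forbidden subgraph $S$ then either occurs as an induced subgraph of that configuration or can be avoided only by forcing an extra edge or chord producing a cycle longer than $C$ --- either way a contradiction, so $G$ is Hamiltonian. The precise analysis depends on $S$ --- shortest for $P_4$, longest for $N$ and $W$ --- and this case analysis is the genuine obstacle; monotonicity and the necessity examples are comparatively soft. (Bedrossian in fact also characterises, by similar arguments followed by an extra ``chord-shortening'' step, the pairs forcing full pancyclicity rather than mere Hamiltonicity, and it is that refinement which the present paper takes up.)
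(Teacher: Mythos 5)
First, a point of comparison: the paper does not prove this statement at all --- Theorem \ref{th1} is quoted verbatim from Bedrossian's Ph.D.\ thesis \cite{Bedrossian} and is used as a black box, so there is no in-paper proof to measure your attempt against. Judged on its own, your proposal correctly identifies the standard architecture of such characterizations (and the monotonicity reduction is sound: the maximal members under the induced-subgraph order suffice for sufficiency, and the common-induced-subgraph logic is the right way to run necessity; note only that $Z_2$ is itself an induced subgraph of $W$, so your ``maximal'' list is slightly redundant). The necessity half, while still only sketched, at least names concrete witness families ($K_{m,m+1}$, joins $K_t\vee\overline{K_{t+1}}$, the Petersen graph, the two-cliques-plus-three-subdivided-edges graph), and the finite check of common induced subgraphs, though not performed, is a mechanical task.

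The genuine gap is the sufficiency direction, which is the mathematical substance of Bedrossian's theorem and is not actually argued. What you write --- longest cycle $C$, a vertex $x$ off $C$ with neighbours on $C$, the two standard claw-free facts ($x\not\sim v_{i\pm1}$ and $v_{i-1}v_{i+1}\in E(G)$), followed by ``one builds a rigid local configuration \dots either $S$ occurs or $C$ can be lengthened'' --- is a statement of intent, not a proof. For each of $P_6$, $Z_2$, $N$, $W$ the passage from those two local facts to a contradiction requires a separate, genuinely nontrivial case analysis: one must track the arcs of $C$ between distinct attachment vertices of the component containing $x$, show that the absence of an induced $S$ forces chords or crossing paths, and then exhibit an explicit longer cycle; nothing in your outline rules out, for instance, configurations where the would-be induced $S$ is destroyed by edges that do not obviously yield a longer cycle. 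Since you explicitly defer exactly this analysis (``this case analysis is the genuine obstacle''), the proposal establishes neither implication in checkable form and cannot be accepted as a proof of the theorem; it is an accurate roadmap to the argument in \cite{Bedrossian} (and to the Faudree--Gould framework), but the decisive steps are missing.
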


\begin{theorem}[Bedrossian \cite{Bedrossian}]\label{th2}
Let $R$ and $S$ be connected graphs with $R,S\neq P_3$ and let
$G$ be a $2$-connected graph which is not a cycle. Then $G$ being
$\{R,S\}$-free implies $G$ is pancyclic if and only if
(up to symmetry) $R=K_{1,3}$ and $S=P_4,P_5,Z_1$ or $Z_2$.
\end{theorem}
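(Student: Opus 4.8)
This is an equivalence, and I would prove the two implications separately, reducing each as far as possible to the companion Hamiltonicity characterization (Theorem~\ref{th1}). For \emph{necessity}, suppose that $\{R,S\}$-freeness forces every $2$-connected non-cycle graph to be pancyclic. A pancyclic graph on $n\ge3$ vertices contains an $n$-cycle and is thus Hamiltonian, and a cycle is Hamiltonian as well, so $\{R,S\}$-freeness forces Hamiltonicity for every $2$-connected graph; hence by Theorem~\ref{th1}, up to symmetry $R=K_{1,3}$ and $S\in\{P_4,P_5,P_6,C_3,Z_1,Z_2,B,N,W\}$. It then remains to discard $S\in\{P_6,C_3,B,N,W\}$. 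The case $S=C_3$ is vacuous, since a triangle-free claw-free graph has maximum degree at most $2$ (a vertex of degree $\ge3$ has three mutually non-adjacent neighbours and so lies in an induced claw) and hence, being $2$-connected, is a cycle; I would record this. For each of $S=P_6,B,N,W$ I would produce a single $2$-connected non-cycle graph that is $\{K_{1,3},S\}$-free but lacks a cycle of some length strictly between $3$ and $n$---sought among suitable explicit small graphs (for instance line graphs of small graphs, which are automatically claw-free, or bipartite-like constructions), chosen so that the relevant large induced subgraph does not occur---each such example witnessing that the implication fails.

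For \emph{sufficiency}, let $R=K_{1,3}$, let $S\in\{P_4,P_5,Z_1,Z_2\}$, and let $G$ be a $2$-connected $\{K_{1,3},S\}$-free graph on $n$ vertices that is not a cycle. Because $\{P_4,P_5,Z_1,Z_2\}$ lies inside the list of Theorem~\ref{th1}, that theorem already supplies a Hamilton cycle $C=v_1v_2\cdots v_nv_1$, so it remains only to find a cycle of every length $\ell$ with $3\le\ell\le n$. First, $G$ contains a triangle: it is not a cycle, so some vertex has degree $\ge3$, and claw-freeness then forces two of its neighbours to be adjacent---this settles $\ell=3$, while $\ell=n$ is $C$ itself. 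For the intermediate values I would assume, for contradiction, that $k$ is the largest element of $\{4,\dots,n-1\}$ for which $G$ has no cycle, so that cycles of all lengths $k+1,\dots,n$ are available, and I would manufacture a $k$-cycle from a $(k+1)$-cycle $D$: if $D$ has a ``short chord'' $v_iv_{i+2}$ (with respect to its own cyclic order) we are done at once, and otherwise I would analyse the neighbourhoods of the vertices of $D$, using claw-freeness to force adjacencies between near-consecutive vertices of $D$ and using the absence of an induced $S$ to rule out the remaining configurations, until a short chord, or a direct $k$-cycle, is forced. The four choices of $S$ follow the same template with different local patterns.

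\noindent\textbf{Main obstacle.} The heavy lifting is in the sufficiency direction, and within it the case $S=P_5$ is the bottleneck: claw-freeness together with $P_5$-freeness is a weak hypothesis, so ruling out a gap in the cycle spectrum demands a careful, many-branch analysis of how the chords of a cycle and the attachments of the vertices outside it interact, and the real work is to organise this so that every branch ends in a short chord or a cycle of the missing length without the subcases proliferating. By comparison, once the correct small graphs are in hand, the counterexamples for $S=C_3,P_6,B,N,W$ in the necessity direction amount only to a finite verification.
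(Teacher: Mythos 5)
First, a point of reference: the paper does not prove this statement at all --- Theorem~\ref{th2} is quoted from Bedrossian's Ph.D.\ thesis --- so your proposal can only be judged on its own merits, and as it stands it is a roadmap rather than a proof. The necessity direction correctly reduces, via ``pancyclic $\Rightarrow$ Hamiltonian'' and Theorem~\ref{th1}, to discarding $S\in\{P_6,C_3,B,N,W\}$, but the discarding is not actually done: for $P_6,B,N,W$ you only promise to find $2$-connected, non-cycle, $\{K_{1,3},S\}$-free graphs that miss some cycle length, without exhibiting any (and this is not a triviality --- many natural claw-free candidates such as wheels or the prism turn out to be pancyclic, so the examples have to be chosen with care). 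Worse, the $C_3$ case is handled backwards: your (correct) observation that a claw-free, triangle-free, $2$-connected graph has maximum degree $2$ and hence is a cycle shows that the class of graphs quantified over is \emph{empty} for the pair $(K_{1,3},C_3)$, so the implication ``$\{K_{1,3},C_3\}$-free $\Rightarrow$ pancyclic'' is vacuously true and the pair cannot be excluded by producing a counterexample. Recording the vacuity does not ``discard'' $C_3$; it exposes a degenerate point in the literal ``only if'' statement that has to be addressed by an explicit convention or remark, not waved through.

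The sufficiency direction, which is the entire substance of Bedrossian's theorem, is likewise only sketched. Getting a Hamilton cycle from Theorem~\ref{th1} and a triangle from claw-freeness at a vertex of degree at least $3$ is fine, but the core step --- taking a largest missing length $k$, a $(k+1)$-cycle $D$, and forcing either a short chord of $D$ or a $k$-cycle using claw-freeness and $S$-freeness --- is exactly where all the work lies, and you do not carry out the case analysis for any of the four choices of $S$. There is no argument given that the branching you describe actually terminates in a short chord or a $k$-cycle (one typically also has to use vertices off $D$ and the interaction with longer cycles, and the four graphs $P_4,P_5,Z_1,Z_2$ do not reduce to a single ``template'' without doing the separate analyses). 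So the proposal identifies the right high-level strategy, consistent with how such forbidden-pair results are proved in the literature, but both directions have genuine unfinished content: missing explicit counterexamples and a mishandled vacuous case on one side, and the entire chord-forcing analysis on the other.
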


\begin{figure}
\begin{center}
\begin{picture}(300,180)
\thicklines

\put(5,140){\multiput(20,30)(50,0){5}{\put(0,0){\circle*{6}}}
\put(20,30){\line(1,0){100}} \put(170,30){\line(1,0){50}}
\qbezier[4](120,30)(145,30)(170,30) \put(18,35){$v_1$}
\put(68,35){$v_2$} \put(118,35){$v_3$} \put(168,35){$v_{i-1}$}
\put(218,35){$v_i$} \put(115,10){$P_i$ (Path)}}

\put(265,125){\put(20,30){\circle*{6}} \put(70,30){\circle*{6}}
\put(45,55){\circle*{6}} \put(20,30){\line(1,0){50}}
\put(20,30){\line(1,1){25}} \put(70,30){\line(-1,1){25}}
\put(40,10){$C_3$ (Cycle)}}

\put(0,0){\put(20,30){\circle*{6}} \put(70,30){\circle*{6}}
\multiput(45,55)(0,25){4}{\circle*{6}} \put(20,30){\line(1,0){50}}
\put(20,30){\line(1,1){25}} \put(70,30){\line(-1,1){25}}
\put(45,55){\line(0,1){25}} \put(45,105){\line(0,1){25}}
\qbezier[4](45,80)(45,92.5)(45,105) \put(50,78){$v_1$}
\put(50,103){$v_{i-1}$} \put(50,128){$v_i$} \put(40,10){$Z_i$}}

\put(90,0){\put(45,40){\circle*{6}} \put(45,40){\line(-1,1){25}}
\put(45,40){\line(1,1){25}} \put(20,65){\line(1,0){50}}
\multiput(20,65)(50,0){2}{\multiput(0,0)(0,30){2}{\put(0,0){\circle*{6}}}
\put(0,0){\line(0,1){30}}} \put(25,10){$B$ (Bull)}}

\put(180,0){\multiput(20,30)(50,0){2}{\multiput(0,0)(0,30){2}{\put(0,0){\circle*{6}}}
\put(0,0){\line(0,1){30}}}
\multiput(45,85)(0,30){2}{\put(0,0){\circle*{6}}}
\put(45,85){\line(0,1){30}} \put(20,60){\line(1,0){50}}
\put(20,60){\line(1,1){25}} \put(70,60){\line(-1,1){25}}
\put(25,10){$N$ (Net)}}

\put(270,0){\put(45,30){\circle*{6}} \put(20,55){\line(1,0){50}}
\put(45,30){\line(1,1){25}} \put(45,30){\line(-1,1){25}}
\multiput(20,55)(0,30){2}{\put(0,0){\circle*{6}}}
\multiput(70,55)(0,30){3}{\put(0,0){\circle*{6}}}
\put(20,55){\line(0,1){30}} \put(70,55){\line(0,1){60}}
\put(10,10){$W$ (Wounded)}}

\label{fi1}
\end{picture}

\caption{Graphs $P_i,C_3,Z_i,B,N$ and $W$}
\end{center}
\end{figure}
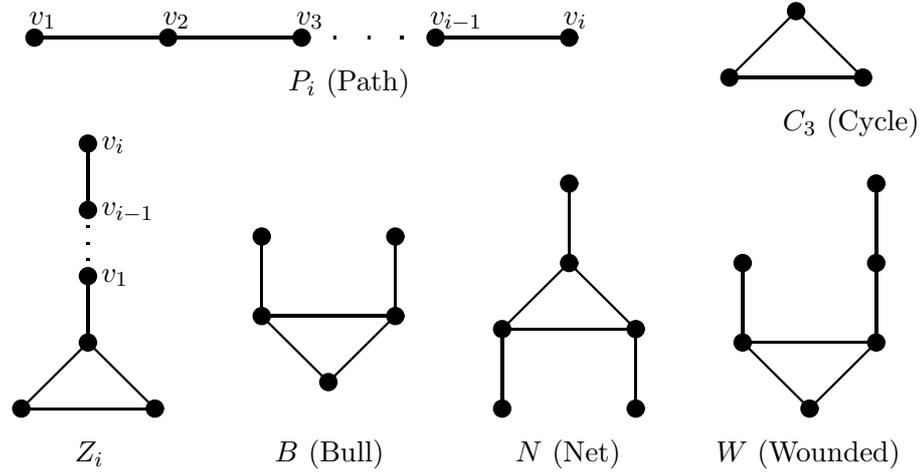
In 1984, Fan \cite{Fan} presented a degree condition
(so-called Fan's condition) involving every pair of
vertices at distance two for a 2-connected graph to
be Hamiltonian.

\begin{theorem}[Fan \cite{Fan}]\label{th3}
Let $G$ be a $2$-connected graph on $n$ vertices.
If $\max\{d(u),d(v)\}\geq n/2$ for every pair of
vertices $u,v$ such that $d(u,v)=2$, then $G$ is
Hamiltonian.
\end{theorem}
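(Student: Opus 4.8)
The plan is to argue by contradiction using a longest cycle, in the spirit of the classical proofs of Dirac's and Ore's theorems. Suppose $G$ is $2$-connected, satisfies the hypothesis, but is not Hamiltonian. I would take a longest cycle $C$ of $G$ and fix an orientation $\vec C$; for $v\in V(C)$ let $v^{+}$, $v^{-}$ denote the successor and predecessor of $v$ on $\vec C$. Since $G$ is not Hamiltonian there is a component $H$ of $G-C$; put $X=N_{C}(H)$ and $k=|X|$. As $G$ is $2$-connected, $k\ge 2$, and since $C$ is longest no two vertices of $X$ are consecutive on $C$ — otherwise the edge of $C$ joining them could be replaced by a path through $H$ (which exists and has length $\ge 2$ because $H$ is connected and attaches to both ends), producing a longer cycle.

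The next step is a short list of ``crossing'' lemmas, each proved by exhibiting a cycle longer than $C$. For every $x\in X$ one has $x^{+}\notin X$, equivalently $x^{+}$ has no neighbour in $H$: a neighbour $h\in V(H)$ of $x^{+}$ would, via a path in $H$ from a neighbour of $x$ to $h$, allow us to replace the edge $xx^{+}$ of $C$ by a strictly longer detour. For distinct $x,y\in X$ one has $x^{+}\not\sim y^{+}$: choosing a path $R$ through $H$ from $x$ to $y$ and a labelling with $\vec C$ meeting $x,x^{+},\dots,y,y^{+},\dots$ in that cyclic order, the concatenation of the arc from $x^{+}$ to $y$, then $R$ reversed back to $x$, then the arc from $x$ back to $y^{+}$, then the assumed edge $y^{+}x^{+}$, is a cycle on $V(C)\cup V(R)$, a set that strictly contains $V(C)$. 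The same statements hold with $+$ replaced by $-$. Hence $X^{+}:=\{x^{+}:x\in X\}$ and $X^{-}:=\{x^{-}:x\in X\}$ are independent sets of $k$ vertices each, disjoint from $X$ and from $V(H)$, and no vertex of $X^{+}\cup X^{-}$ has a neighbour in $H$.

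Now I would feed this into Fan's hypothesis. Fix $x\in X$ and a neighbour $h\in V(H)\cap N(x)$. Then $x^{+}\,x\,h$ and $x^{-}\,x\,h$ are paths of length $2$ with non-adjacent ends, so $d(x^{+},h)=d(x^{-},h)=2$, whence $\max\{d(x^{+}),d(h)\}\ge n/2$ and $\max\{d(x^{-}),d(h)\}\ge n/2$. On the other hand $N(h)\subseteq X\cup(V(H)\setminus\{h\})$ gives $d(h)\le k+|H|-1$, while $N(x^{+})$ avoids $X^{+}\cup V(H)$ and $N(x^{-})$ avoids $X^{-}\cup V(H)$, so $d(x^{+}),d(x^{-})\le n-k-|H|$. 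Therefore, for each such pair $x,h$, either $d(h)\ge n/2$ — in which case $k+|H|\ge n/2+1$ — or $d(x^{+}),d(x^{-})\ge n/2$ — in which case $k+|H|\le n/2$; since these two conclusions are incompatible, the same one holds for every $x\in X$.

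The remaining step — which I expect to be where the real work lies — is to turn this dichotomy into an outright contradiction. In the case $k+|H|\le n/2$, every vertex of $X^{+}\cup X^{-}$ has degree at least $n/2$ but, by $d(x^{\pm})\le n-k-|H|$, an almost completely prescribed neighbourhood (forced to contain all of $X$ and nearly all of $V(C)$), and one has to push this rigidity until a cycle longer than $C$ appears. In the case $k+|H|\ge n/2+1$, the cycle $C$ is short, $|C|\le n-|H|\le n/2+k-1$, although it still carries the $2k$ distinct vertices of $X\cup X^{+}$, and some vertex of $H$ then has degree at least $n/2$ and hence is joined to nearly all of $X\cup V(H)$; here one has to argue with the gaps between consecutive vertices of $X$ along $C$, and with the remaining components of $G-C$ if there are any. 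To make this counting close I would most likely need a sharper choice at the outset, for instance taking, among all longest cycles, one that carries an ``ear'' (a path with both endpoints on $C$ and interior in $G-C$) with as many interior vertices as possible, so that the vertices of $H$ adjacent to $C$ have tightly controlled neighbourhoods. The crossing lemmas are routine; the extremal bookkeeping is the crux.
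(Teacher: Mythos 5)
This statement is quoted in the paper from Fan's 1984 article and is not proved there, so your attempt can only be judged on its own terms; and on its own terms it is not a proof. Your setup is sound: the choice of a longest cycle $C$, a component $H$ of $G-C$ with attachment set $X$, the crossing lemmas ($X^{+}$, $X^{-}$ independent, no neighbours in $H$), the observation that $x^{+}xh$ and $x^{-}xh$ give genuine distance-two pairs, and the resulting dichotomy ``either every $h\in N_H(X)$ is heavy and $k+|H|\ge n/2+1$, or every vertex of $X^{+}\cup X^{-}$ is heavy and $k+|H|\le n/2$'' are all correct. But the proof stops exactly there, and you say so yourself (``the remaining step \dots is where the real work lies''). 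That remaining step is not bookkeeping; it is the entire content of Fan's theorem. In fact the first horn of your dichotomy can be closed by a standard strengthening of your crossing lemma: for distinct $x,y\in X$ one shows, by the same detour constructions, that no vertex $u$ has $u\in N(x^{+})$ and $u^{+}\in N(y^{+})$ on the appropriate arc and that $x^{+},y^{+}$ have no common neighbour off $C\cup H$, whence $d(x^{+})+d(y^{+})\le n-|H|<n$, contradicting two heavy vertices; you did not carry out even this half.

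The second horn, however, is where your plan has no argument at all and where no routine counting will close it: there the heavy vertices lie inside $H$, with neighbourhoods confined to $X\cup V(H)$, and the constraints you have available ($|C|\ge 2k$, $n\ge |C|+|H|$, $k+|H|\ge n/2+1$) are mutually consistent whenever $|H|\ge 2$, so no numerical contradiction follows. To finish one must convert the density of $G[X\cup V(H)]$ into a long path through $H$ between two attachment vertices and compare it with the arcs of $C$ (or, as in Fan's original proof, analyse the bridges of a longest cycle with a carefully chosen extremal configuration); your closing sentence gestures at such a refinement (``a sharper choice at the outset'') but supplies neither the choice nor the argument. As it stands the proposal is a correct reduction to the known hard cases, not a proof.
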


Obviously, Fan's condition is equivalent to every
2-connected $P_3$-$f$-heavy graph is Hamiltonian.
By restricting Fan's condition to some induced
subgraphs of 2-connected graphs, Ning and Zhang
\cite{Ning_Zhang} extended Theorem \ref{th1}
as follows.

\begin{theorem}[Ning and Zhang \cite{Ning_Zhang}]\label{th4}
Let $R$ and $S$ be connected graphs with $R,S\neq P_3$
and let $G$ be a $2$-connected graph. Then $G$ being
$\{R,S\}$-$f$-heavy implies $G$ is Hamiltonian if
and only if (up to symmetry) $R=K_{1,3}$ and
$S=P_4,P_5,P_6,Z_1,Z_2,B,N$ or $W$.
\end{theorem}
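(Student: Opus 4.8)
The statement is an equivalence whose two directions have quite different flavours. The ``only if'' direction will be essentially a corollary of Bedrossian's Theorem~\ref{th1} together with a single new counterexample, while the ``if'' direction is where the real work lies; I would prove it by a longest-cycle argument modelled on Fan's proof of Theorem~\ref{th3}.

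For the ``only if'' direction, observe that every $\{R,S\}$-free graph is $\{R,S\}$-$f$-heavy, so any $2$-connected non-Hamiltonian $\{R,S\}$-free graph witnessing the failure of the implication ``$\{R,S\}$-free $\Rightarrow$ Hamiltonian'' for a pair $(R,S)$ simultaneously witnesses the failure of ``$\{R,S\}$-$f$-heavy $\Rightarrow$ Hamiltonian''. By Theorem~\ref{th1}, the only pairs surviving this test are, up to symmetry, $R=K_{1,3}$ with $S\in\{P_4,P_5,P_6,C_3,Z_1,Z_2,B,N,W\}$, so it remains only to discard $S=C_3$. Since $C_3$ has diameter $1$ it contains no pair of vertices at distance~$2$, so being $C_3$-$f$-heavy is vacuous and ``$\{K_{1,3},C_3\}$-$f$-heavy'' coincides with ``claw-$f$-heavy''; it thus suffices to produce a $2$-connected \emph{claw-free} (hence vacuously claw-$f$-heavy) non-Hamiltonian graph. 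I would take the nine-vertex graph $G_0$ obtained from three disjoint triangles $x_iy_iz_i$ $(i=1,2,3)$ by adding all edges inside $\{x_1,x_2,x_3\}$ and all edges inside $\{y_1,y_2,y_3\}$: one checks that $G_0$ is $2$-connected, that every vertex-neighbourhood induces a graph of independence number at most $2$ (so $G_0$ is claw-free), and that $G_0$ is non-Hamiltonian because each $z_i$ has degree~$2$, forcing a Hamilton cycle to contain the three paths $x_iz_iy_i$ and then to join them into one cycle using only ``$xx$'' and ``$yy$'' edges, which is impossible since that requires an even number of such connecting edges yet only three are available.

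For the ``if'' direction, fix $S$ in the list, let $G$ be a $2$-connected $\{K_{1,3},S\}$-$f$-heavy graph on $n$ vertices, and suppose for contradiction that $G$ is not Hamiltonian. I would first reduce the number of cases by two elementary observations about $f$-heaviness: every distance-$2$ pair of an induced $P_5$ of $G$ is a distance-$2$ pair of an induced $P_4$ contained in it, and likewise with $(P_6,P_5)$, so $P_4$-$f$-heavy $\Rightarrow P_5$-$f$-heavy $\Rightarrow P_6$-$f$-heavy; and each distance-$2$ pair of an induced bull of $G$ is a distance-$2$ pair of one of the two induced copies of $Z_1$ it contains, while each distance-$2$ pair of an induced net is a distance-$2$ pair of one of the induced bulls it contains, so $Z_1$-$f$-heavy $\Rightarrow B$-$f$-heavy $\Rightarrow N$-$f$-heavy. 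Hence it suffices to treat $S\in\{P_6,Z_2,N,W\}$. For each of these, take a longest cycle $C$ (necessarily not spanning) and a component $D$ of $G-C$; $2$-connectivity gives $D$ at least two neighbours on $C$, and the maximality of $C$ together with standard rotation/insertion machinery (as in the proof of Theorem~\ref{th3}) produces an \emph{extremal configuration}: a vertex $v\in D$ whose neighbours on $C$, listed cyclically as $u_1,\dots,u_k$ with $k\ge 2$, bound nonempty segments of $C$ that cannot be absorbed to lengthen $C$. The hypothesis then enters in two complementary ways. First, the claw part: suitable triples chosen among $v$, the $u_i$, their successors $u_i^{+}$ on $C$, and vertices of $D$ induce a $K_{1,3}$, so at least two of its end-vertices have degree $\ge n/2$; tracking these forced ``heavy'' vertices and combining with the degree bounds that the maximality of $C$ imposes near the gaps forces the heavy vertices to lie on $C$, with many neighbours, some inside $D$. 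Second, the $S$ part: the same configuration displays an induced copy of $S$ --- a long induced path when $S=P_6$, and for the triangle-based graphs $Z_2,N,W$ a triangle among the attachment vertices together with arcs of $C$ and vertices of $D$ --- and for one of its prescribed distance-$2$ pairs both degrees can be bounded by $n/2-1$ using the cycle structure, contradicting $S$-$f$-heavy; in the remaining subcases the extra edges forced by the heavy vertices already located allow $C$ to be rerouted through part of $D$ into a longer cycle, again a contradiction. Hence $G$ is Hamiltonian.

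The main obstacle, I expect, is the last step for the ``fattest'' graphs $S=N$ and $S=W$: one must show that \emph{every} way a component of $G-C$ can attach to a longest cycle yields either an induced $N$ (resp.\ $W$) with a distance-$2$ pair of small degree, or else a cycle longer than $C$, and this requires an exhaustive case analysis organised by how many segments of $C$ receive edges from $D$, by which chords are present among the attachment vertices, and by which vertices the claw condition has already forced to be heavy --- recycling, wherever the graph is locally claw-free, the structural arguments behind Theorem~\ref{th1}. A second, more technical, difficulty is the ``heaviness lies on $C$'' step: proving that a vertex of degree $\ge n/2$ cannot sit off a longest cycle of a $2$-connected graph inside these configurations is itself a Fan-type rotation argument (Theorem~\ref{th3}) and must be established once, uniformly enough to serve every case.
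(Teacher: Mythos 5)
You should first note that the paper itself offers no proof of Theorem~\ref{th4}: it is quoted from Ning and Zhang \cite{Ning_Zhang}, so the comparison can only be with what that (long) cited paper actually establishes. Judged on its own terms, your necessity direction is correct and complete: since every $\{R,S\}$-free graph is $\{R,S\}$-$f$-heavy, Bedrossian's Theorem~\ref{th1} already confines the admissible pairs to $R=K_{1,3}$ and $S\in\{P_4,P_5,P_6,C_3,Z_1,Z_2,B,N,W\}$, and your three-triangle graph $G_0$ is indeed $2$-connected, claw-free (hence vacuously claw-$f$-heavy, while $C_3$-$f$-heavy is vacuous because $C_3$ has no distance-two pair) and non-Hamiltonian by the degree-two/parity argument, which correctly removes $C_3$ from the list. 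The monotonicity reductions $P_4$-$f$-heavy $\Rightarrow$ $P_5$-$f$-heavy $\Rightarrow$ $P_6$-$f$-heavy and $Z_1$-$f$-heavy $\Rightarrow$ $B$-$f$-heavy $\Rightarrow$ $N$-$f$-heavy are also valid, so it is legitimate to restrict the sufficiency direction to $S\in\{P_6,Z_2,N,W\}$.

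The genuine gap is that for these four cases you give a programme, not a proof. Every decisive claim is asserted rather than established: that the extremal attachment of a component $D$ of $G-C$ to a longest cycle $C$ exists in the form you describe; that the claws you intend to use are actually induced and that ``tracking the forced heavy vertices'' pushes them onto $C$ with neighbours inside $D$ (you yourself concede that a heavy vertex off a longest cycle is not excluded by any available lemma and would need a separate Fan-type argument); that the configuration always contains an induced copy of $S$ one of whose prescribed distance-two pairs has \emph{both} degrees below $n/2$ (this is exactly what a contradiction to $S$-$f$-heaviness requires, and it is the heart of the matter, especially for $N$ and $W$ where you explicitly flag the exhaustive case analysis as an open obstacle); and that in the complementary subcases the heavy vertices permit lengthening $C$. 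These unproved steps are precisely the substance of the Ning--Zhang paper, which carries out a detailed structural case analysis around a longest cycle for each remaining $S$. So as it stands your proposal proves the ``only if'' half but only outlines, and does not prove, the ``if'' half of Theorem~\ref{th4}.
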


In this paper, our aim is to find the corresponding
Fan-type heavy subgraph conditions for a 2-connected
graph to be pancyclic. First, from a well known result,
we can deduce that every 2-connected $P_3$-$f_1$-heavy 
graph is pancyclic.

\begin{theorem}[Benhocine and Wojda \cite{Benhocine_Wojda}]\label{th5}
Let $G$ be a $2$-connected graph on $n\geq 3$ vertices. If
$G$ is $P_3$-$f$-heavy, then $G$ is pancyclic unless $n=4r$,
$r>2$, and $G=F_{4r}$ (see Figure $2$), or $n$ is even and
$G=K_{n/2,n/2}$ or else $n\geq 6$ is even and $G=K_{n/2,n/2}-e$.
\end{theorem}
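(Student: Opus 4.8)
The plan is to bootstrap from Fan's Hamiltonicity theorem and then argue by downward induction on cycle length, peeling off the three exceptional graphs as the configurations in which the induction stalls. A graph is $P_3$-$f$-heavy precisely when it satisfies Fan's condition, so Theorem~\ref{th3} supplies a Hamilton cycle $C=v_1v_2\cdots v_nv_1$ (subscripts read modulo $n$), and it then suffices to produce a cycle of each length in $\{3,\dots,n\}$, as $C$ supplies $n$. The recurring device is: if a cycle $D=u_1u_2\cdots u_mu_1$ lies in $G$ but $G$ has no cycle of length $m-1$, then for every $i$ the chord $u_iu_{i+2}$ is absent (it would shortcut $D$ to a cycle of length $m-1$ on $V(D)\setminus\{u_{i+1}\}$), hence $d_G(u_i,u_{i+2})=2$ and Fan's condition forces $\max\{d(u_i),d(u_{i+2})\}\ge n/2$ for all $i$.

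I would first handle the case that $G$ is triangle-free, where every chord $v_iv_{i+2}$ of $C$ is automatically absent. The device then makes the vertices of $C$ of degree $<n/2$ independent in the ``step-two'' cycle(s) on $V(C)$, so a short count bounds their number and the set $H$ of vertices of degree $\ge n/2$ satisfies $|H|\ge n/2$. If some edge of $G$ joins two vertices of $H$, triangle-freeness makes their disjoint neighbourhoods partition $V(G)$ and $G$ is bipartite; otherwise $H$ is an independent set too large to embed in the Hamilton cycle, except when $n$ is even and $H$ alternates --- and that contradicts the step-two independence of its complement. Either way $G$ is bipartite, hence has parts of size $n/2$, and a further count with Fan's condition and $2$-connectedness (if a vertex misses two vertices of the opposite part, the latter have disjoint neighbourhoods, which cascades to a contradiction) forces $G=K_{n/2,n/2}$ or $K_{n/2,n/2}-e$ --- exactly the $n$-even exceptions, which contain every even cycle up to $C_n$ but no odd cycle.

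So assume $G$ contains a triangle, giving cycles of lengths $3$ and $n$, and suppose for contradiction that $G$ is not pancyclic. Taking $\ell$ largest with no $C_\ell$ in $G$, we have $4\le\ell\le n-1$ and $C_{\ell+1}=u_1\cdots u_{\ell+1}u_1$ in $G$; the device yields $u_iu_{i+2}\notin E(G)$ and $\max\{d(u_i),d(u_{i+2})\}\ge n/2$ for all $i$, so at least $\lceil(\ell+1)/2\rceil$ of the $u_i$ have degree $\ge n/2$. The remaining work is to wring this rigid picture dry --- which chords $C_{\ell+1}$ may carry, how its high-degree vertices attach to one another and to $V(G)\setminus V(C_{\ell+1})$, and how a triangle can survive the absence of every step-two chord --- until the only graph compatible with all of it is $F_{4r}$ of Figure~2, forcing $n=4r$ with $r>2$ and contradicting $G\ne F_{4r}$. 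As this exhausts the cases, $G$ is pancyclic whenever it is none of the three listed graphs.

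The main obstacle is exactly this last structural reduction. Fan's condition is only a \emph{local} degree hypothesis --- it controls pairs at distance $2$, not all non-adjacent pairs --- so one cannot fall back on Bondy's classical ``a Hamiltonian graph with at least $n^2/4$ edges is pancyclic or $K_{n/2,n/2}$'': the high-degree vertices the device produces on $C_{\ell+1}$ may all occupy alternate positions, which is precisely what happens in $F_{4r}$. Carrying the argument through therefore means painstakingly tracking the forced non-adjacencies, the placement of the triangle, and the cut structure of $G$, and eliminating every near-miss other than $F_{4r}$; by contrast, the triangle-free reduction, the closing bipartite count, and the passage from Theorem~\ref{th3} to Hamiltonicity are routine.
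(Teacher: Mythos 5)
You should first note that the paper does not prove Theorem~\ref{th5} at all: it is quoted as a known result of Benhocine and Wojda \cite{Benhocine_Wojda}, so there is no in-paper proof to compare with, and your proposal has to stand on its own. Judged that way, it has a genuine gap: the entire non-bipartite core of the theorem is announced rather than proved. After obtaining a Hamilton cycle from Theorem~\ref{th3} and setting up the standard device (if $G$ has a cycle of length $m$ but none of length $m-1$, every step-two chord of that cycle is absent, so Fan's condition applies to each such pair), you arrive at the case where $G$ contains a triangle and a $C_{\ell+1}$ but no $C_\ell$, and at that point you write that the remaining work is to ``wring this rigid picture dry'' until only $F_{4r}$ survives --- and your closing paragraph concedes that this ``last structural reduction'' is the main obstacle. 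That reduction \emph{is} the theorem. The device only yields that roughly half the vertices of $C_{\ell+1}$ have degree at least $n/2$ in $G$; those degrees may be realized largely by neighbours outside $V(C_{\ell+1})$, so neither a contradiction nor the structure of $F_{4r}$ follows without a substantial further argument (tracking how the heavy vertices attach to the cycle and to $V(G)\setminus V(C_{\ell+1})$, converting chords and external paths into a $C_\ell$, and treating small $\ell$ separately). This analysis occupies the bulk of Benhocine and Wojda's original paper and is entirely missing here; no mechanism is proposed for extracting the missing cycle length or for pinning down $F_{4r}$ as the unique survivor.

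The parts you do carry out are plausible but are also the routine ones. The triangle-free branch (step-two chords of the Hamilton cycle absent, low-degree vertices independent under the step-two relation, an edge inside the heavy set forcing a bipartition, and the closing count forcing $K_{n/2,n/2}$ or $K_{n/2,n/2}-e$) can be made to work, though the ``cascades to a contradiction'' step for excluding two missing edges needs to be written out, since nonadjacent vertices in opposite parts of a bipartite graph are at odd distance and Fan's condition cannot be applied to them directly --- one must instead apply it within a part. So the proposal establishes, in outline, only the bipartite exceptions; the case that produces $F_{4r}$, and with it the statement of the theorem, remains unestablished.
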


Furthermore, we can see that $P_3$ is the only connected
graph $S$ such that every 2-connected $S$-$f_1$-heavy graph
is pancyclic. For details, see \cite[Theorem 13]{Faudree_Gould}.
So we can pose the following problem naturally.

\begin{problem} Which two connected graphs $R$ and $S$
other than $P_3$ imply that every 2-connected
$\{R,S\}$-$f_1$-heavy graph is pancyclic?
\end{problem}

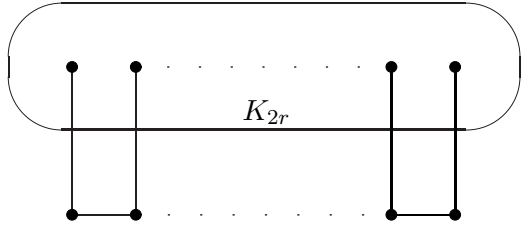
\begin{figure}
\begin{center}
\setlength{\unitlength}{1.4mm}
\begin{picture}(60,40)
\put(30,26){\oval(48,12)}
\put(12,26){\line(0,-1){14}}
\put(18,26){\line(0,-1){14}}
\put(12,12){\line(1,0){6}}
\put(12,26){\circle*{1}}
\put(18,26){\circle*{1}}
\put(12,12){\circle*{1}}
\put(18,12){\circle*{1}}
\put(48,26){\line(0,-1){14}}
\put(42,26){\line(0,-1){14}}
\put(48,12){\line(-1,0){6}}
\put(48,26){\circle*{1}}
\put(42,26){\circle*{1}}
\put(48,12){\circle*{1}}
\put(42,12){\circle*{1}}
\dottedline{3}(18,26)(42,26)
\dottedline{3}(18,12)(42,12)
\put(28,21){$K_{2r}$}

\label{fi2}
\end{picture}

\caption{The Graph $F_{4r}$}
\end{center}
\end{figure}

By Theorem \ref{th2}, we know that $R=K_{1,3}$ (up to symmetry)
and $S$ must be one of $Z_1$, $Z_2$, $P_4$ and $P_5$.

In this paper, we mainly prove the following result.

\begin{theorem}\label{th6}
Let $G$ be a $2$-connected graph which is not a cycle. If $G$ is
$\{K_{1,3},Z_2\}$-$f_1$-heavy, then $G$ is pancyclic.
\end{theorem}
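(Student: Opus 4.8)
The plan is to follow the now-standard two-stage strategy for these Fan-type heavy pancyclicity theorems. First, I would establish that $G$ is Hamiltonian; then I would ``peel off'' cycles one length at a time, from length $n$ down to length $3$. For the Hamiltonicity step, observe that a $\{K_{1,3},Z_2\}$-$f_1$-heavy graph is in particular $\{K_{1,3},Z_2\}$-$f$-heavy (since the threshold $(n+1)/2$ exceeds $n/2$), so Theorem~\ref{th4} with $R=K_{1,3},S=Z_2$ immediately gives that $G$ is Hamiltonian. Fix a Hamilton cycle $C=v_1v_2\cdots v_nv_1$ with a chosen orientation, and write $v^+,v^-$ for successor and predecessor along $C$.

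The heart of the argument is the inductive descent: assuming $G$ has a cycle of length $\ell$ with $4\le \ell\le n$, I want to produce a cycle of length $\ell-1$. The classical tool here is a Bondy-type or Schmeichel--Hakimi-type local exchange: if $D=u_1u_2\cdots u_\ell u_1$ is a cycle of length $\ell$ and some vertex $w\notin D$ (or a chord configuration inside $D$) allows a ``short-cut'', one gets the shorter cycle directly; otherwise the failure of all such short-cuts forces a rigid local structure, and inside that structure one locates an induced $K_{1,3}$ or an induced $Z_2$. Applying the $f_1$-heavy hypothesis to that copy yields a vertex of degree $\ge(n+1)/2$; a vertex of such high degree on (or near) a cycle of length $\ell$ then cannot avoid creating a cycle of length $\ell-1$, by a counting/pigeonhole argument on the cyclic gaps between its neighbors. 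One must handle separately the top of the range --- getting an $(n-1)$-cycle from the Hamilton cycle --- where the relevant tool is essentially Bondy's lemma that a Hamiltonian graph on $n$ vertices with $e(G)\ge n^2/4$ is pancyclic or is $K_{n/2,n/2}$, combined with the Benhocine--Wojda analysis (Theorem~\ref{th5}) to rule out the bipartite exceptional graphs, which are not $\{K_{1,3},Z_2\}$-$f_1$-heavy anyway. The short cycles ($\ell=3$) also deserve a direct check: a $\{K_{1,3}\}$-$f_1$-heavy $2$-connected graph that is not a cycle contains a triangle, essentially because two nonadjacent neighbors $x,y$ of a vertex $u$ with a third neighbor $z$ not adjacent to both would give an induced claw and hence a high-degree vertex, which one parlays into a triangle.

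I expect the main obstacle to be the case analysis inside the descent step: when no short-cut exists, one has to verify that the forced local configuration genuinely contains an \emph{induced} copy of $K_{1,3}$ or of $Z_2$ (not merely a subgraph), and then translate the resulting high-degree vertex into the existence of an $(\ell-1)$-cycle without losing control of which neighbors lie where on $D$. The $Z_2$ case is more delicate than a pure forbidden-subgraph argument because the heavy vertex handed to us might be either of the two ``distance-two'' vertices of the $Z_2$, so the argument must be symmetric enough to exploit whichever one is heavy. A secondary subtlety is that $f_1$-heaviness, unlike $Z_2$-freeness, does not forbid the configuration outright --- it only supplies a degree bound --- so every place where Bedrossian's original proof would say ``this induced $Z_2$ cannot occur'' must be replaced by ``this induced $Z_2$ gives a heavy vertex, now re-run the cycle-shortening using that vertex.'' Organizing these replacements cleanly, and reducing the number of essentially different local patterns one must inspect (e.g.\ by first disposing of the case where $G$ is claw-free, where Bedrossian's theorem applies directly, and then treating the case where some induced claw exists, supplying a vertex of degree $\ge(n+1)/2$), is where the real work lies.
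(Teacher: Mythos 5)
There is a genuine gap: your proposal is a strategy sketch whose central step is never carried out and, as stated, does not work. The descent step ``from an $\ell$-cycle produce an $(\ell-1)$-cycle'' rests on the claim that a vertex of degree $\ge (n+1)/2$ on or near a cycle $D$ of length $\ell$ ``cannot avoid creating a cycle of length $\ell-1$ by a counting/pigeonhole argument on the cyclic gaps between its neighbors.'' A single super-heavy vertex does not force a cycle of one \emph{specific} length relative to an arbitrary intermediate cycle $D$: pigeonhole on gaps only yields chords spanning \emph{some} gap sizes, and the vertex's neighbors need not lie on $D$ at all. This is precisely why the quantitative tools in this area (Lemma~\ref{le2} of Bondy, Lemma~\ref{le3} of Hakimi--Schmeichel, Lemma~\ref{le4} of Ferrara--Jacobson--Harris) require a \emph{pair} of vertices at distance one or two along a \emph{Hamilton} cycle with degree sum $\ge n$ or $n+1$; they do not apply to a lone heavy vertex sitting on a shorter cycle. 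Since you yourself defer ``the real work'' (locating induced copies, tracking neighbor positions, re-running the shortening) to unspecified case analysis, the proof of the theorem is not actually given.

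For comparison, the paper does not do a length-by-length descent at all. It argues by contradiction: if $G$ is $\{K_{1,3},Z_2\}$-free, Theorem~\ref{th2} finishes; otherwise the $f_1$-heavy hypothesis applied to one induced claw or $Z_2$ yields a single super-heavy vertex $u$. Deleting $u$, the graph $G-u$ is $\{K_{1,3},Z_2\}$-$f$-heavy; if $G-u$ is $2$-connected, Theorem~\ref{th4} gives a Hamilton cycle of $G-u$, i.e.\ an $(n-1)$-cycle avoiding $u$, and Lemma~\ref{le1} immediately gives pancyclicity. The whole remaining work concerns the case where $\{u,v\}$ is a $2$-cut: $G-\{u,v\}$ splits into components $H_1,H_2$, $G$ is Hamiltonian by Theorem~\ref{th4}, and a detailed analysis along the Hamilton cycle (Claims~\ref{cl1}--\ref{cl7} and the subsequent cases on $h_1$) repeatedly manufactures super-heavy \emph{pairs} at distance one or two on that cycle, so that Lemmas~\ref{le2}--\ref{le4} apply, or builds the missing cycle lengths explicitly. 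If you want to salvage your outline, you would need to replace the pigeonhole heuristic by this kind of pair-based argument anchored to the Hamilton cycle, or supply an entirely new lemma that converts one super-heavy vertex plus an $\ell$-cycle into an $(\ell-1)$-cycle, which is not available in the cited literature.
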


As a corollary of Theorem \ref{th6}, we have
\begin{theorem}\label{th7}
Let $G$ be a $2$-connected graph which is not a cycle. If $G$ is
$\{K_{1,3},P_4\}$-$f_1$-heavy, then $G$ is pancyclic.
\end{theorem}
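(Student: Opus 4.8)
The plan is to derive Theorem~\ref{th7} from Theorem~\ref{th6} by observing that its hypothesis is the stronger one: every $\{K_{1,3},P_4\}$-$f_1$-heavy graph is automatically $\{K_{1,3},Z_2\}$-$f_1$-heavy. Since the two hypotheses share the clause ``$K_{1,3}$-$f_1$-heavy'', the whole task reduces to proving, for a graph $G$ on $n$ vertices, the implication: if $G$ is $P_4$-$f_1$-heavy, then $G$ is $Z_2$-$f_1$-heavy. Once this is established, $G$ is $\{K_{1,3},Z_2\}$-$f_1$-heavy and Theorem~\ref{th6} immediately gives that $G$ is pancyclic.

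To prove the implication I would take an arbitrary induced subgraph $H$ of $G$ isomorphic to $Z_2$, say with vertices $a_1,\dots,a_5$ and edges $a_1a_2,a_2a_3,a_3a_4,a_4a_5$ together with the chord $a_1a_3$, so $H$ is the triangle $a_1a_2a_3$ with a pendant path $a_3a_4a_5$. First I would list, by inspecting the ten pairs of vertices of $H$, all pairs at distance $2$ in $H$; these are exactly $\{a_1,a_4\}$, $\{a_2,a_4\}$ and $\{a_3,a_5\}$. Then, for each of these pairs, I would exhibit an induced $P_4$ of $G$ supported on a subset of $V(H)$ in which the two vertices of the pair sit at distance $2$; $P_4$-$f_1$-heaviness of $G$ then yields $\max\{d(u),d(v)\}\ge (n+1)/2$ for that pair. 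Concretely, $\{a_1,a_3,a_4,a_5\}$ induces the path $a_1a_3a_4a_5$ (an induced $P_4$ of $G$, since $H$ is induced so no extra chords appear), whose distance-$2$ pairs are $\{a_1,a_4\}$ and $\{a_3,a_5\}$; and $\{a_2,a_3,a_4,a_5\}$ induces the path $a_2a_3a_4a_5$, an induced $P_4$ whose distance-$2$ pairs are $\{a_2,a_4\}$ and $\{a_3,a_5\}$. These two copies of $P_4$ jointly cover all three distance-$2$ pairs of $H$, so $H$ is $f_1$-heavy; as $H$ was arbitrary, $G$ is $Z_2$-$f_1$-heavy.

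There is no genuine obstacle here: the argument is a short structural reduction rather than a new proof, and the only point demanding care is the bookkeeping — verifying that the two chosen $4$-element subsets of $V(H)$ really induce $P_4$'s (this is precisely where the hypothesis that $H$ is an \emph{induced} copy of $Z_2$ enters, to rule out the edges $a_1a_4,a_1a_5,a_3a_5,a_2a_4,a_2a_5$) and that the distance-$2$ pairs inside those $P_4$'s exhaust the distance-$2$ pairs of $Z_2$. The one place where a slip could occur is a miscount of the distance-$2$ pairs of $Z_2$, so I would carry out that enumeration explicitly before invoking $P_4$-$f_1$-heaviness.
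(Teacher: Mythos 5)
Your proposal is correct and matches the paper's (implicit) argument: the paper states Theorem~\ref{th7} as a corollary of Theorem~\ref{th6}, and the intended justification is exactly your observation that in any induced $Z_2$ the sets $\{a_1,a_3,a_4,a_5\}$ and $\{a_2,a_3,a_4,a_5\}$ induce $P_4$'s whose distance-two pairs cover all three distance-two pairs of the $Z_2$, so $P_4$-$f_1$-heavy implies $Z_2$-$f_1$-heavy. Your enumeration of the pairs and the verification that these subsets are induced $P_4$'s are both accurate, so no gap remains.
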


In \cite{Bedrossian_Chen_Schelp}, Bedrossian et al. proved a
theorem as follows.

\begin{theorem}[Bedrossian, Chen and Schelp \cite{Bedrossian_Chen_Schelp}]\label{th8}
Let $G$ be a $2$-connected graph on $n$ vertices. If $G$ is
$\{K_{1,3},Z_1\}$-$f$-heavy, then $G$ is pancyclic unless
$G=F_{4r}$ or $G=K_{n/2,n/2}$ or $G=K_{n/2,n/2}-e$ or
else $G$ is a cycle.
\end{theorem}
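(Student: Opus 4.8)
The plan is to reduce to Theorem~\ref{th5} (Benhocine--Wojda) by splitting on whether $G$ satisfies the full Fan condition, i.e.\ whether $G$ is $P_3$-$f$-heavy. Throughout, call a vertex \emph{heavy} if it has degree at least $n/2$ and \emph{light} otherwise. First I would observe that $G$ is Hamiltonian: taking $R=K_{1,3}$ and $S=Z_1$ in Theorem~\ref{th4} shows that a $2$-connected $\{K_{1,3},Z_1\}$-$f$-heavy graph has a Hamilton cycle, which already provides a cycle of length $n$. It then remains to find cycles of every length $\ell$ with $3\le\ell\le n-1$, or to identify $G$ with one of the four listed exceptions.

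If $G$ is $P_3$-$f$-heavy, Theorem~\ref{th5} applies directly and yields that $G$ is pancyclic unless $G=F_{4r}$, $G=K_{n/2,n/2}$ or $G=K_{n/2,n/2}-e$, which are exactly three of the four admissible exceptions. (The fourth exception, a cycle $C_m$ with $m\ge5$, is never $P_3$-$f$-heavy, while $C_3$ is pancyclic and $C_4=K_{2,2}$ is covered by $K_{n/2,n/2}$; so no exceptional graph is lost in this branch.) Consequently the theorem is reduced to a single statement: \emph{if $G$ is $2$-connected, $\{K_{1,3},Z_1\}$-$f$-heavy, not a cycle, and not $P_3$-$f$-heavy, then $G$ is pancyclic.} This is consistent, since one checks that $F_{4r}$, $K_{n/2,n/2}$ and $K_{n/2,n/2}-e$ are all $P_3$-$f$-heavy, so the only non-pancyclic graphs surviving this branch would have to be cycles, which are excluded by hypothesis.

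To attack the reduced statement I would use the failure of $P_3$-$f$-heaviness to extract two light vertices $u,v$ with $d(u,v)=2$, and fix a common neighbour $x$. The claw condition immediately forces local density: if some $y\in N(x)$ were adjacent to neither $u$ nor $v$, then $\{x;u,v,y\}$ would induce a claw centred at $x$ whose two end vertices $u,v$ are both light, contradicting claw-$f$-heaviness; hence $N(x)\subseteq\{u,v\}\cup N(u)\cup N(v)$. I would then feed this into the $Z_1$ condition, applied to triangles through $x$ with $u$ or $v$ playing the pendant (degree-$1$) vertex of $Z_1$: since $u,v$ are light, the two triangle vertices at distance $2$ from the pendant are forced to be heavy. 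Iterating these two constraints should pin the neighbourhoods of $u$, $v$, and $x$ together tightly enough that, placing them on the Hamilton cycle $C$, one can read off short cycles and then extend them chord-by-chord to realise every length from $3$ up to $n-1$.

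The hard part will be exactly this final structural step: converting the density constraints into cycles of \emph{all} intermediate lengths, rather than just some of them. Two things must be ruled out. One is the bipartite-type obstruction, where $G$ has no odd cycle; this is where $K_{n/2,n/2}$ and $K_{n/2,n/2}-e$ would otherwise re-enter, and eliminating it amounts to exhibiting a triangle (or other odd cycle) forced by the local configuration around $u,v,x$. The other is the possibility that avoiding one intermediate length forces $G$ to collapse into a pure cycle. I expect the $Z_1$-heaviness, more than the claw condition, to be decisive here, since $Z_1$ is precisely a triangle with a pendant edge and therefore controls how short odd cycles attach to the Hamilton cycle; organising the resulting analysis by the number of common neighbours of $u$ and $v$ and by their cyclic positions on $C$ is where the bulk of the casework, and the genuine difficulty, will lie.
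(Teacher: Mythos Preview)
The paper does not prove Theorem~\ref{th8} at all: it is quoted verbatim from Bedrossian, Chen and Schelp~\cite{Bedrossian_Chen_Schelp} and used as a black box to deduce Theorem~\ref{th9}. There is therefore no ``paper's own proof'' to compare your proposal against.

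As for the proposal itself, the reduction in your first two paragraphs is sound and clean: splitting on $P_3$-$f$-heaviness and invoking Theorem~\ref{th5} in the heavy branch is a natural move, and your check that the exceptional graphs $F_{4r}$, $K_{n/2,n/2}$, $K_{n/2,n/2}-e$ all live in that branch is correct. The difficulty is that your third and fourth paragraphs are not a proof but a programme. The claim that the claw and $Z_1$ constraints ``should pin the neighbourhoods \dots\ tightly enough'' to produce cycles of every length is exactly the substance of the theorem, and you have not carried it out; you yourself flag this as ``the hard part'' and ``where the bulk of the casework \dots\ will lie.'' In particular, the observation $N(x)\subseteq\{u,v\}\cup N(u)\cup N(v)$ constrains only one vertex $x$, and the $Z_1$ argument you sketch produces heavy vertices in triangles through $x$, but neither of these by itself gives control over cycle lengths far from $x$ on the Hamilton cycle. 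Turning this into an actual proof would require either the detailed case analysis from~\cite{Bedrossian_Chen_Schelp} or a genuinely new idea; as written, the proposal stops at the point where the real work begins.
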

By Theorem \ref{th8}, we have

\begin{theorem}\label{th9}
Let $G$ be a $2$-connected graph which is not a cycle.
If $G$ is $\{K_{1,3},Z_1\}$-$f_1$-heavy, then $G$ is
pancyclic.
\end{theorem}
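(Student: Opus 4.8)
The statement to prove is Theorem~\ref{th9}: a $2$-connected graph $G$ which is not a cycle and is $\{K_{1,3},Z_1\}$-$f_1$-heavy is pancyclic.

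The plan is to derive this directly from Theorem~\ref{th8}, which handles the $f$-heavy (that is, $f_0$-heavy) version. The key observation is the implication recorded in the introduction: a $\{K_{1,3},Z_1\}$-$f_1$-heavy graph need not be $\{K_{1,3},Z_1\}$-$f$-heavy in general, since the degree threshold in the $f_1$ condition is $(n+1)/2$ rather than $n/2$. However, when $n$ is even these two thresholds coincide as integer constraints: $\max\{d(u),d(v)\}\ge (n+1)/2$ is equivalent to $\max\{d(u),d(v)\}\ge n/2$ exactly when $n$ is even (more precisely, $\lceil (n+1)/2\rceil=\lceil n/2\rceil+1$ only when $n$ is odd; when $n$ is even both ceilings equal $n/2+\tfrac12$... let me be careful here). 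Actually the cleanest route is: the $f_1$-heavy condition always implies the $f$-heavy condition, because $(n+1)/2>n/2$, so any $\{K_{1,3},Z_1\}$-$f_1$-heavy graph is automatically $\{K_{1,3},Z_1\}$-$f$-heavy. Hence Theorem~\ref{th8} applies directly.

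So the argument is: let $G$ be $2$-connected, not a cycle, and $\{K_{1,3},Z_1\}$-$f_1$-heavy. Since $(n+i)/2$ is increasing in $i$, every $\{K_{1,3},Z_1\}$-$f_1$-heavy graph is $\{K_{1,3},Z_1\}$-$f$-heavy; apply Theorem~\ref{th8} to conclude that $G$ is pancyclic unless $G\in\{F_{4r},K_{n/2,n/2},K_{n/2,n/2}-e\}$. It then remains to check that none of these three exceptional graphs is $\{K_{1,3},Z_1\}$-$f_1$-heavy, which rules them out and forces $G$ to be pancyclic. The main (and only nontrivial) step is this verification: I would exhibit in each case an induced $K_{1,3}$ or induced $Z_1$ containing a pair of vertices at distance $2$ in the subgraph, both of whose degrees in $G$ are strictly less than $(n+1)/2$. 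For $K_{n/2,n/2}$ every vertex has degree exactly $n/2<(n+1)/2$; two vertices on the same side are at distance $2$ and lie in an induced $P_3$, but $P_3$ is excluded, so one must find them inside an induced claw (take one vertex on side $A$ adjacent to three vertices of side $B$, which is an induced $K_{1,3}$, and note two of the leaves are at distance $2$ with both degrees $n/2$) — valid as long as $n/2\ge 3$, i.e. $n\ge 6$; the small case $K_{2,2}=C_4$ is a cycle and is excluded by hypothesis. For $K_{n/2,n/2}-e$ the same claw works. For $F_{4r}$ (with $r>2$, so $n=4r\ge 12$), the two degree-$2$ "pendant" vertices attached to the two cliques, together with their neighbours, form an induced $Z_1$ (a triangle with a pendant edge) in which the pendant vertex and one clique vertex are at distance $2$; both have degree at most $2r<(n+1)/2=(4r+1)/2$, so $F_{4r}$ is not $\{K_{1,3},Z_1\}$-$f_1$-heavy.

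I expect the degree-bookkeeping in the $F_{4r}$ case to be the most delicate point, since one has to correctly identify an induced copy of $Z_1$ (equivalently of a triangle-with-pendant) and the pair realizing distance $2$ within it, and then confirm that \emph{both} vertices of that pair have degree below $(n+1)/2$ in $G$ — it is not enough for one of them to have small degree. Once the three exceptional families are eliminated, no further work is needed, since Theorem~\ref{th8} already guarantees pancyclicity in all remaining cases.
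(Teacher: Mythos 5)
Your proposal is correct and follows essentially the same route as the paper: the paper deduces Theorem~\ref{th9} directly from Theorem~\ref{th8}, implicitly using exactly your observations that $f_1$-heavy implies $f$-heavy and that the exceptional graphs $F_{4r}$, $K_{n/2,n/2}$ and $K_{n/2,n/2}-e$ fail the $f_1$-condition (each has maximum degree at most $n/2<(n+1)/2$ yet contains an induced claw or $Z_1$ with a distance-two pair), while cycles are excluded by hypothesis. Your verification of the exceptional cases is sound (minor quibble: $F_{4r}$ has a single clique $K_{2r}$ with $r$ attached pendant pairs, but your degree bound $2r$ applies to all its vertices, so the argument stands).
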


Combining with Theorems \ref{th6}, \ref{th7} and \ref{th9},
we obtain Theorem \ref{th10}, which partially answers
Problem 1.

\begin{theorem}\label{th10}
Let $S$ be a connected graph with $S\neq P_3$ and let
$G$ be a $2$-connected claw-$f_1$-heavy graph which is
not a cycle. Then $G$ being $S$-$f_1$-heavy implies $G$
is pancyclic if $S=P_4,Z_1$ or $Z_2$.
\end{theorem}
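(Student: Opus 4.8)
\medskip
\noindent\textbf{Proof strategy for Theorem~\ref{th10}.}
The plan is to obtain Theorem~\ref{th10} with no new argument, as a common consequence of the three special cases already available, via a short case split on~$S$. By Theorem~\ref{th2}, the only connected graphs $S\neq P_3$ that could force pancyclicity in this context are $P_4,P_5,Z_1$ and $Z_2$, and Theorem~\ref{th10} asserts exactly the three cases $S\in\{P_4,Z_1,Z_2\}$; so I would treat these one by one.

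In each case the two hypotheses on $G$ --- that $G$ is claw-$f_1$-heavy and that $G$ is $S$-$f_1$-heavy --- together say precisely that $G$ is $\{K_{1,3},S\}$-$f_1$-heavy. If $S=Z_2$, then $G$ is $\{K_{1,3},Z_2\}$-$f_1$-heavy, hence pancyclic by Theorem~\ref{th6}. If $S=Z_1$, then $G$ is $\{K_{1,3},Z_1\}$-$f_1$-heavy, hence pancyclic by Theorem~\ref{th9}. If $S=P_4$, then $G$ is $\{K_{1,3},P_4\}$-$f_1$-heavy, hence pancyclic by Theorem~\ref{th7}; alternatively, one checks that every distance-two pair of an induced copy of $Z_2$ --- namely $\{a_1,a_4\}$, $\{a_2,a_4\}$ and $\{a_3,a_5\}$ in the notation above --- already arises as a distance-two pair of an induced $P_4$ contained in that $Z_2$ (the sub-path on $a_2a_3a_4a_5$ witnesses $\{a_2,a_4\}$ and $\{a_3,a_5\}$, and the sub-path on $a_1a_3a_4a_5$ witnesses $\{a_1,a_4\}$), so a $P_4$-$f_1$-heavy graph is automatically $Z_2$-$f_1$-heavy and this case again reduces to Theorem~\ref{th6}. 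Since $S$ ranges only over $\{P_4,Z_1,Z_2\}$, this completes the proof of Theorem~\ref{th10}.

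Thus Theorem~\ref{th10} itself demands nothing new; the real content sits in Theorems~\ref{th6} and~\ref{th9}, which I would establish separately. Theorem~\ref{th9} is the quick one: $\{K_{1,3},Z_1\}$-$f_1$-heaviness implies $\{K_{1,3},Z_1\}$-$f$-heaviness, so Theorem~\ref{th8} applies, and each of its exceptional graphs $F_{4r}$, $K_{n/2,n/2}$ and $K_{n/2,n/2}-e$ is either a cycle or readily seen not to be $\{K_{1,3},Z_1\}$-$f_1$-heavy (the degrees in question fall below $(n+1)/2$), so none of them can occur under the hypotheses of Theorem~\ref{th10}. The genuine obstacle is Theorem~\ref{th6}. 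The route I would take there is: first secure a Hamilton cycle of $G$ --- a $2$-connected $\{K_{1,3},Z_2\}$-$f_1$-heavy graph is $\{K_{1,3},Z_2\}$-$f$-heavy, hence Hamiltonian by Theorem~\ref{th4} --- and then run a Bondy-style ``Hamiltonian $\Rightarrow$ pancyclic'' descent along that Hamilton cycle, using the Fan-type lower bounds on $\max\{d(u),d(v)\}$ that the $K_{1,3}$- and $Z_2$-heaviness force at distance-two pairs to build, for each $\ell$ with $3\le\ell\le n-1$, a cycle of length $\ell$; the sporadic graphs of Theorem~\ref{th5} would be ruled out using $2$-connectivity and the assumption that $G$ is not a cycle. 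The hard part, where most of the casework will lie, is the local analysis near the chords of the Hamilton cycle: showing that whenever some cycle length is missing one can exhibit an induced claw or $Z_2$ whose forced degree inequality can be converted into a cycle of exactly that length.
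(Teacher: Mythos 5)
Your proposal is correct and matches the paper's own derivation: Theorem~\ref{th10} is obtained there exactly as you do it, by splitting on $S\in\{P_4,Z_1,Z_2\}$ and invoking Theorems~\ref{th6}, \ref{th9} and \ref{th7} (the last being stated in the paper as a corollary of Theorem~\ref{th6}, via the same observation that every distance-two pair in an induced $Z_2$ lies in an induced $P_4$ inside it). Your remarks on how Theorems~\ref{th9} and~\ref{th6} are themselves proved likewise track the paper's route, so nothing further is needed.
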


The rest of this paper is organized as follows.
In Section \ref{se2}, we will give additional terminology
and list some useful lemmas. The proof of Theorem \ref{th6}
will be postponed to Section \ref{se3}.
\section{Preliminaries}\label{se2}
In this section, we first introduce some additional terminology
and notation and then present four lemmas which will be used
in our proof of Theorem \ref{th6}.

Let $G$ be a graph and $S$ be a subset of of $V(G)$. We use
$G[S]$ to denote the subgraph of $G$ induced by $S$ and $G-S$
to denote $G[V(G)\setminus S]$. In particular, if $S=\{u\}$,
then we use $G-u$ instead of $G-\{u\}$. If $S=\{x_i:1\leq i\leq 5\}$
and $G[S]$ is isomorphic to $Z_2$, then we say that
$\{x_1,x_2,x_3;x_4,x_5\}$ induces a $Z_2$, where $x_1x_2x_3x_1$
is a triangle and $x_1$ is the vertex of degree 3 in $G[S]$.
If $S=\{x_i:1\leq i\leq 4\}$ and $G[S]$ is isomorphic to $K_{1,3}$,
then we say that $\{x_1;x_2,x_3,x_4\}$ induces a claw, where $x_1$
is the center, and $x_2,x_3,x_4$ are the end vertices.

Let $k,l$ $(k<l)$ be two integers. We say that $G$ contains a
\emph{$k$-cycle} if $G$ contains a cycle of length $k$, and $G$
contains \emph{$[k,l]$-cycles} if $G$ contains cycles of all
lengths from $k$ to $l$. In particular, for a vertex $u\in V(G)$,
we say that $G$ contains a \emph{$u$-triangle} if $G$ contains the
cycle $uxyu$,
where $x,y\in V(G)$.

A vertex $v$ of a graph $G$ on $n$ vertices is called \emph{heavy}
if $d(v)\geq n/2$, and \emph{super-heavy} if $d(v)\geq (n+1)/2$.
For two vertices $u,v$ of $G$, $\{u,v\}$ is called a \emph{heavy-pair}
if $d(u)+d(v)\geq n$ and a \emph{super-heavy pair} if $d(u)+d(v)\geq n+1$.

\begin{lemma}[Benhocine and Wojda \cite{Benhocine_Wojda}] \label{le1}
Let $G$ be a graph on $n\geq 4$ vertices and $C$ be a cycle of
length $n-1$ in $G$. If $d(x)\geq n/2$ for the vertex $x\in V(G)\backslash V(C)$,
then $G$ is pancyclic.
\end{lemma}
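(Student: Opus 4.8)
The plan is to exploit that $C$ already contains all but one vertex of $G$. Since $x\notin V(C)$ and $|V(C)|=n-1$, we have $V(G)=V(C)\cup\{x\}$, so all of the at least $n/2$ neighbours of $x$ lie on $C$. I would write $C=v_0v_1\cdots v_{n-2}v_0$ and identify the subscripts with the cyclic group $\mathbb{Z}_{n-1}$, setting $A=\{i\in\mathbb{Z}_{n-1}:v_i\in N(x)\}$; thus $|A|=d(x)\ge n/2$.

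The first real step is a short pigeonhole argument showing that the difference set $A-A$ equals all of $\mathbb{Z}_{n-1}$: for a fixed nonzero $d\in\mathbb{Z}_{n-1}$ the sets $A$ and $A+d$ are subsets of an $(n-1)$-element group with $|A|+|A+d|=2|A|\ge n>n-1$, hence they meet, and a common point $i=j+d$ with $i,j\in A$ witnesses $d\in A-A$.

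The second step converts differences into cycles. If $v_i,v_j\in N(x)$ and the arc $v_iv_{i+1}\cdots v_j$ of $C$ (subscripts mod $n-1$) has exactly $\ell$ edges, then $xv_iv_{i+1}\cdots v_jx$ is a cycle of length $\ell+2$, its vertices being $x$ together with the $\ell+1$ distinct vertices of that arc. By the first step, for every $\ell\in\{1,2,\ldots,n-2\}$ there is such a pair with arc-length exactly $\ell$, so $G$ has a cycle of each length $\ell+2\in\{3,4,\ldots,n\}$, which is precisely pancyclicity. (In particular $\ell=n-2$ yields a Hamilton cycle, the long arc between two consecutive neighbours of $x$ on $C$ covering all of $V(C)$.)

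The argument is essentially immediate once the setup is in place; the only point that requires care is the inequality $2|A|>n-1$, which is exactly where the hypotheses $d(x)\ge n/2$ and $|V(C)|=n-1$ enter and where the bound is tight. I would also check the smallest case $n=4$ directly, though it is trivial. I do not anticipate a genuine obstacle: the content of the lemma is just this translation from ``many neighbours on a long cycle'' to ``all cycle lengths present.''
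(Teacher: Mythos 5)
Your proof is correct and complete: the pigeonhole step $2|A|\ge n>n-1$ does give every nonzero difference in $\mathbb{Z}_{n-1}$, and each difference $\ell\in\{1,\dots,n-2\}$ converts to a cycle of length $\ell+2$ through $x$, covering all lengths $3$ to $n$. The paper itself offers no proof to compare against — Lemma~\ref{le1} is quoted from Benhocine and Wojda \cite{Benhocine_Wojda} as a known result — and your difference-set argument is exactly the standard way this fact is established, so nothing further is needed.
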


\begin{lemma}[Bondy \cite{Bondy}]\label{le2}
Let $G$ be a graph on $n$ vertices with a Hamilton cycle $C$.
If there exist two vertices $x,y\in V(G)$ such that $d_C(x,y)=1$
and $d(x)+d(y)\geq n+1$, then $G$ is pancyclic.
\end{lemma}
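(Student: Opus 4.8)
The plan is to fix a Hamilton cycle $C$ and show that, for each $\ell$ with $3\le \ell\le n-1$, the chords incident to the heavy pair $\{x,y\}$ can be used to reroute $C$ into a cycle of length exactly $\ell$; the length $n$ is already realised by $C$ itself. First I would relabel so that $C=v_1v_2\cdots v_nv_1$ with $x=v_1$ and $y=v_n$, so that the hypothesis $d_C(x,y)=1$ becomes the statement that $v_1v_n$ is an edge of $C$. Write $A=\{i:v_1v_i\in E(G)\}$ and $B=\{j:v_nv_j\in E(G)\}$ for the position sets of the neighbours of the two heavy vertices, so that $|A|+|B|=d(x)+d(y)\ge n+1$, and note the forced memberships $2,n\in A$ and $1,n-1\in B$ coming from the cycle edges at $v_1$ and $v_n$.

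The next step is to catalogue the elementary rerouting moves and the lengths they produce. A single chord $v_1v_i$ produces the two cycles $v_1v_2\cdots v_iv_1$ and $v_1v_iv_{i+1}\cdots v_nv_1$, of lengths $i$ and $n-i+2$; symmetrically a chord $v_nv_j$ yields lengths $n-j+1$ and $j+1$. A pair of neighbours $v_a,v_b$ of $v_1$ with $a<b$ yields the cycle $v_1v_av_{a+1}\cdots v_bv_1$ of length $b-a+2$ by following the arc between them, and likewise for two neighbours of $v_n$. Finally a crossing pair $v_1v_i,\ v_nv_j$ with $j<i$ yields the cycle $v_1v_2\cdots v_jv_nv_{n-1}\cdots v_iv_1$ of length $n+1-(i-j)$. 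Thus a target length $\ell$ is attained as soon as $A$ (or $B$) contains two positions at arc-distance $\ell-2$, or $A$ and $B$ contain a crossing pair whose indices differ by $n+1-\ell$, among further possibilities.

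As a warm-up confirming that the degree hypothesis does real work: since $N(x)$ and $N(y)$ are subsets of an $n$-set, $|N(x)\cap N(y)|\ge d(x)+d(y)-n\ge 1$, so $x$ and $y$ have a common neighbour and $G$ has a triangle. I would then try to push this quantitatively, using pigeonhole on $A$ and $B$ (crucially exploiting the forced endpoints $2,n\in A$ and $1,n-1\in B$) to show that for every admissible length one of the rerouting moves above applies.

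The hard part, and where I expect the real work to lie, is proving that no length in $[3,n-1]$ is missed: no single family of moves is individually sufficient, since a length can fail to be produced by the forward/backward chord cycles and by the same-vertex arc cycles yet still be realised by a more intricate reroute that deletes an interior vertex and re-closes through both $v_1$ and $v_n$. The clean way to finish is therefore a global crossing-chord count in the spirit of Bondy: assume for contradiction that $G$ has no cycle of some length $\ell\in[3,n-1]$, translate this into forbidden configurations among the chords at $v_1$ and $v_n$ (no two neighbours of $v_1$, and none of $v_n$, at arc-distance $\ell-2$; no crossing pair at index-difference $n+1-\ell$; together with the further incompatibilities forced by the interior-deletion reroutes), and then add these constraints up to conclude $|A|+|B|\le n$, contradicting $|A|+|B|\ge n+1$. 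Organising these constraints so that they collapse into a single clean inequality — and handling the boundary and parity cases, where the forced memberships $2,n\in A$ and $1,n-1\in B$ are exactly what rescue the count — is the main obstacle.
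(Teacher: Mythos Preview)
The paper does not prove Lemma~\ref{le2}; it is quoted from Bondy~\cite{Bondy} in the preliminaries and used as a black box throughout Section~\ref{se3}. There is therefore nothing in the paper to compare your argument against.

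As to the proposal itself: your strategy is exactly Bondy's original one --- label the Hamilton cycle so that the heavy edge is $v_1v_n$, record the neighbour-position sets $A=\{i:v_1v_i\in E\}$ and $B=\{j:v_nv_j\in E\}$, catalogue the chord reroutes and the cycle lengths they produce, and then, assuming some length $\ell$ is missing, turn the resulting forbidden configurations into the bound $|A|+|B|\le n$. That plan is sound. What you have written, however, is explicitly a plan and not a proof: you flag the final counting as ``the main obstacle'' and do not carry it out. In the standard execution one does not need the full menu of moves you list; the two crossing patterns alone already suffice. Concretely, if there is no $\ell$-cycle then for every index $i$ one cannot have both $i\in A$ and $i+\ell-3\in B$ (else the cycle $v_1v_iv_{i+1}\cdots v_{i+\ell-3}v_nv_1$ has length $\ell$), and one cannot have both $i\in A$ and $i-(n+1-\ell)\in B$ (else your Option~1 crossing gives length $\ell$). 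Running $i$ over all positions, together with the trivially forbidden single-chord positions ($\ell\notin A$, $n-\ell+2\notin A$, $\ell-1\notin B$, $n-\ell+1\notin B$), collapses to $|A|+|B|\le n$. So the ``main obstacle'' is in fact a short pigeonhole once the right shift is identified; your proposal stops just before that step.
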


\begin{lemma}[Hakimi and Schmeichel \cite{Schmeichel_Hakimi}]\label{le3}
Let $G$ be a graph on $n$ vertices with a Hamilton cycle $C$.
If there exist two vertices $x,y\in V(G)$ such that $d_C(x,y)=1$
and $d(x)+d(y)\geq n$, then $G$ is pancyclic unless $G$ is
bipartite or else $G$ is missing only an $(n-1)$-cycle.
\end{lemma}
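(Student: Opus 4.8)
Relabel the vertices so that the Hamilton cycle is $C=v_1v_2\cdots v_nv_1$ with $x=v_1$, $y=v_2$ (legitimate since $d_C(x,y)=1$), indices read modulo $n$. The cycle $C$ already supplies an $n$-cycle, so the task is to build a cycle of each length $k$ with $3\le k\le n-1$, the only permitted failures being the two exceptions in the statement. If $d(v_1)+d(v_2)\ge n+1$ this is Lemma~\ref{le2}, so the substantive case is the exact equality $d(v_1)+d(v_2)=n$, which I assume henceforth.

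The engine of the proof is one construction, the ``crossing'' of a chord at $v_1$ with a chord at $v_2$. Let $A=\{i:v_1v_i\in E(G)\}$ and $B=\{i:v_2v_i\in E(G)\}$, and set $A'=A\cap\{3,\dots,n\}$, $B'=B\cap\{3,\dots,n\}$; then $n\in A'$ and $3\in B'$ (these are edges of $C$) and $|A'|+|B'|=d(v_1)+d(v_2)-2=n-2=|\{3,\dots,n\}|$. For any $j\in A'$, $i\in B'$ with $i\ne j$, the path $v_jv_1v_2v_i$ closed up by the arc of $C$ between $v_i$ and $v_j$ avoiding $v_1$ and $v_2$ is a cycle of length $|i-j|+3$, and $i=j\in A'\cap B'$ yields the triangle $v_1v_2v_i$; hence $G$ has a cycle of every length in $\{|i-j|+3:i\in B',\,j\in A'\}$. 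To this I would add the two ``one-sided'' constructions: a chord $v_1v_j$ with $3\le j\le n-1$ splits $C$ into cycles of lengths $j$ and $n-j+2$, and a chord $v_2v_i$ with $4\le i\le n$ into cycles of lengths $i-1$ and $n-i+2$. The problem thereby reduces to showing that the lengths produced by all these constructions cover $\{3,\dots,n-1\}$.

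Then comes the case analysis. If $G$ is triangle-free then $A'\cap B'=\emptyset$ (a common index $p$ gives the triangle $v_1v_2v_p$), so $A',B'$ partition $\{3,\dots,n\}$; since in a triangle-free graph $N(v_1)$ and $N(v_2)$ are independent and here partition $V(G)$, the graph $G$ is bipartite — the first exception. So assume $G$ has a triangle, so $A'\cap B'\ne\emptyset$ and a $3$-cycle is present. For a missing length $k$ with $4\le k\le n-1$ I would translate the constructions into forbidden memberships — $k\notin A'$, $n-k+2\notin A'$, $k+1\notin B'$, $n-k+2\notin B'$, and $B'$ disjoint from $(A'+(k-3))\cup(A'-(k-3))$ inside $\{3,\dots,n\}$ — and confront these with $|A'|+|B'|=n-2$ together with the pinned $n\in A'$, $3\in B'$ to reach a contradiction for every $k\le n-2$. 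For $k=n-1$ the crossing admits only the candidate pairs $\{3,n-1\}$ and $\{4,n\}$ and one of the one-sided constructions degenerates, so the count can be one unit short and the argument need not go through — this is precisely the ``missing only an $(n-1)$-cycle'' exception. In sum: if $G$ is not bipartite it has cycles of all lengths $3,\dots,n-2,n$, hence is pancyclic unless it lacks the $(n-1)$-cycle.

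The hard part is the step just described: turning, for each missing $k\le n-2$, the list of forbidden memberships into a genuine violation of $|A'|+|B'|=n-2$, and verifying that $k=n-1$ is the only length at which this counting is allowed to fall short. This is a finite but delicate combinatorial analysis of the positions of $A'$ and $B'$ in $\{3,\dots,n\}$ — the sensitive points being the clipping of index ranges near the ends $v_1,v_2,v_3,v_n$ of $C$ and the interplay between the crossing and the one-sided families — and one must keep an eye on the parity of $n$, which is what the bipartite exception hinges on; the few small values of $n$ would be checked directly.
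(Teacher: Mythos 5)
The paper does not prove this statement at all---it is quoted as a known result of Schmeichel and Hakimi---so the only question is whether your argument stands on its own, and it does not. What you have written is the standard set-up (reduction via Lemma~\ref{le2} to the exact case $d(v_1)+d(v_2)=n$, the chord sets $A',B'$ with $|A'|+|B'|=n-2$, the crossing and one-sided cycle constructions, and the triangle-free $\Rightarrow$ bipartite observation), but the entire substance of the theorem is the step you explicitly defer: showing, for every missing length $4\le k\le n-2$, that the forbidden memberships contradict $|A'|+|B'|=n-2$. Announcing this as ``a finite but delicate combinatorial analysis'' is not a proof; in Schmeichel--Hakimi's paper this is several pages of intricate positional case analysis, and it is precisely where the result could fail. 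Worse, the mechanism you propose cannot work as stated: in $K_{n/2,n/2}$ every odd length $k$ is absent, yet $|A'|+|B'|=n-2$ holds and all of your listed membership constraints are satisfied, so no contradiction can come from that count alone. The non-bipartiteness must enter the counting in an essential, structural way (not merely through ``a $3$-cycle is present''), and your sketch gives no indication of how it does, beyond the remark that one should ``keep an eye on the parity of $n$.''

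There is also a concrete logical slip: from ``$G$ has a triangle'' you conclude $A'\cap B'\neq\emptyset$, but a triangle elsewhere in $G$ gives no common neighbour of $v_1$ and $v_2$. The correct dichotomy is on $A'\cap B'$ itself, and in the case $A'\cap B'=\emptyset$ with $G$ not triangle-free, neither of your two branches applies: $A'$ and $B'$ partition $\{3,\dots,n\}$, but you cannot conclude bipartiteness (that needed triangle-freeness of all of $G$), and you have no common neighbour to launch the crossing argument from. So in addition to the missing core argument, the case structure itself has an unhandled configuration.
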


\begin{lemma}[Ferrara, Jacobson and Harris \cite{Ferrara_Jacobson_Harris}]\label{le4}
Let $G$ be a graph on $n$ vertices with a Hamilton cycle $C$.
If there exist two vertices $x,y\in V(G)$ such that $d_C(x,y)=2$
and $d(x)+d(y)\geq n+1$, then $G$ is pancyclic.
\end{lemma}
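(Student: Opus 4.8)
The plan is to adapt the rerouting-and-counting method behind Lemmas \ref{le2} and \ref{le3} (due to Bondy, and to Hakimi and Schmeichel) to a pair of vertices at cyclic distance $2$. Write $C=v_1v_2\cdots v_nv_1$ and, after relabelling, put $x=v_1$ and $y=v_3$, so that $v_2$ lies between them on the short arc while $Q:=v_3v_4\cdots v_nv_1$ is the long arc. The cycle $C$ itself handles length $n$. For length $3$: from $d(x)+d(y)\geq n+1$ we may assume $d(v_1)>n/2$, so $v_1$ has fewer than $n/2-1$ non-neighbours, each incident with at most two of the $n$ edges of $C$; hence some edge $v_iv_{i+1}$ has both ends in $N(v_1)$ and $v_1v_iv_{i+1}v_1$ is a triangle. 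So it remains to produce a $k$-cycle for each $k$ with $4\leq k\leq n-1$.

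Fix such a $k$ and assume $G$ has no $k$-cycle. The candidate cycles I would use are reroutings of $C$ through one chord at $v_1$, one chord at $v_3$, and either the detour $v_3v_2v_1$ or a single sub-arc of $Q$:
\begin{mathitem}
\item if $v_1\sim v_a$ and $v_3\sim v_b$ with $4\leq a\leq b\leq n$, then $v_1v_av_{a+1}\cdots v_bv_3v_2v_1$ is a cycle of length $b-a+4$;
\item if $v_1\sim v_a$ and $v_3\sim v_b$ with $4\leq a<b\leq n$, then $v_1v_av_{a-1}\cdots v_3v_bv_{b+1}\cdots v_nv_1$ is a cycle of length $n+a-b$;
\end{mathitem}
supplemented by the simple cycles $v_1v_2\cdots v_av_1$ (of length $a$, for $v_1\sim v_a$) and, for $v_3\sim v_b$, the cycles $v_3v_4\cdots v_bv_3$ (length $b-2$) and $v_3v_2v_1v_nv_{n-1}\cdots v_bv_3$ (length $n-b+4$). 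If $G$ has no $k$-cycle then (i) and (ii) force, for each $a$ in the relevant range with $v_1\sim v_a$, that $a+(k-4)$ and $a+(n-k)$ are non-neighbours of $v_3$; equivalently, $N(v_1)\cap Q$ is disjoint from two fixed cyclic shifts of $N(v_3)\cap Q$, while the simple cycles pin down the remaining positions. Measuring these forbidden overlaps against the $n-3$ positions of $Q$, and charging the always-present edges $v_1v_n$, $v_1v_2$, $v_2v_3$, $v_3v_4$ once each, should yield $d(v_1)+d(v_3)\leq n$ (the displayed $+1$ being precisely the slack these reroutings consume), contradicting the hypothesis.

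The delicate part is exactly this endpoint accounting: the constraints from (i) and (ii) are valid only when the chord ends avoid the boundary vertices $v_2,v_3,v_4,v_n$, the four cycle edges meeting $\{v_1,v_2,v_3\}$ must be counted once rather than twice, and the degenerate lengths $k=4$ and $k=n-1$ (where one of the shifts collapses onto the boundary) will probably need a short separate argument. One clean special case worth isolating: if $v_2$ has degree $2$ and $v_1\sim v_3$, then $v_1v_3v_4\cdots v_nv_1$ is an $(n-1)$-cycle of $G-v_2$ through the edge $v_1v_3$, with $d_{G-v_2}(v_1)+d_{G-v_2}(v_3)\geq n-1$, so Lemma \ref{le3} applied to $G-v_2$ --- after disposing of its bipartite and ``only the $(n-2)$-cycle missing'' alternatives directly --- yields all short cycles of $G$. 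I expect the final argument to interleave reductions of this kind with the direct count above.
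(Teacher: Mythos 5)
First, a point of reference: the paper does not prove this statement at all --- Lemma \ref{le4} is quoted verbatim from Ferrara, Jacobson and Harris \cite{Ferrara_Jacobson_Harris}, so there is no internal proof to compare against. Your attempt is therefore a re-proof of a cited research result, and as written it is a plan rather than a proof. The decisive step --- deducing $d(v_1)+d(v_3)\leq n$ from the absence of a $k$-cycle --- is only asserted (``should yield'', ``probably need a short separate argument'', ``I expect the final argument to interleave\dots''), and it is exactly where the difficulty of the Ferrara--Jacobson--Harris theorem lives. Concretely, your constructions (i) and (ii) forbid, for each neighbour $v_a$ of $v_1$ on the long arc, the two positions $a+(k-4)$ and $a+(n-k)$ from $N(v_3)$, but these are two \emph{linear} shifts by different amounts $s_1=k-4$ and $s_2=n-k$ (with $s_1+s_2=n-4$), not a single cyclic shift of the arc of $n-3$ positions. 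Neighbours $v_a$ of $v_1$ with $a>n-\min(s_1,s_2)$ escape both constraints, the two forbidden sets can overlap, and the always-present edges at $v_2,v_3,v_4,v_n$ enter the count with multiplicities that depend on $k$; a straightforward tally along these lines does not automatically give $d(v_1)+d(v_3)\leq n$, and the extremal configurations near $K_{n/2,n/2}$ (degree sum exactly $n$, not pancyclic) show how little slack there is. Until that accounting is actually carried out --- including the degenerate values $k=4$ and $k=n-1$ and the chord $v_1v_3$ --- you have not proved the lemma.

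The auxiliary reduction at the end has the same character: applying Lemma \ref{le3} to $G-v_2$ gives pancyclicity of $G-v_2$ only ``unless bipartite or missing the $(n-2)$-cycle,'' and ``disposing of'' those alternatives is not routine --- a bipartite $G-v_2$ with a degree sum of $n-1$ on an edge is a near-balanced complete bipartite graph, and you would still need to manufacture all even and odd cycle lengths of $G$ from it plus the degree-$2$ vertex $v_2$. So both the main count and the special-case escape routes contain genuine unfinished work. If you want this lemma available without reproving FJH, the honest course is to cite it, as the paper does; if you want an independent proof, the two-shift count must be completed in full, most likely with a case analysis comparable in length to Hakimi--Schmeichel's proof of Lemma \ref{le3}.
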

\section{Proof of Theorem \ref{th6}}\label{se3}

We prove Theorem \ref{th6} by contradiction. Suppose that $G$
satisfies the condition of Theorem \ref{th6} but is not pancyclic.
Since the result is easy to verify for $3\leq n\leq 5$, we assume
that $n\geq 6$.

If $G$ is $\{K_{1,3},Z_2\}$-free, then by Theorem \ref{th2},
$G$ is pancyclic. Thus we assume that $G$ contains an induced
claw or an induced $Z_2$. Therefore, there is a super-heavy
vertex, say $u\in V(G)$. Set $G'=G-u$. Since $G$ is $\{K_{1,3},Z_2\}$-$f_1$-heavy,
$G'$ is $\{K_{1,3},Z_2\}$-$f$-heavy. If $G'$ is $2$-connected, then by
Theorem \ref{th4}, $G'$ is Hamiltonian. Hence $G$ is pancyclic by Lemma
\ref{le1}. Now, it will be assumed that $G'$ is not 2-connected. Then there
exists a vertex $v\in V(G)$ ($v\neq u$) such that $G-\{u,v\}$ is not
connected. By Theorem \ref{th4}, $G$ is Hamiltonian. Hence $G-\{u,v\}$
consists of two components $H_1$ and $H_2$. Without loss of generality,
we assume that $|V(H_1)|\leq |V(H_2)|$, where $V(H_1)=\{x_1,x_2,\ldots,x_{h_1}\}$
and $V(H_2)=\{y_1,y_2,\ldots,y_{h_2}\}$.
Let $C=uy_1\cdots y_{h_2}vx_{h_1}\cdots x_1u$ be a
Hamilton cycle with the given orientation. In the following,
for any two vertices $w_1,w_2\in V(C)$, we use $C[w_1,w_2]$ to
denote the segment of $C$ from $w_1$ to $w_2$ along the
orientation. Set $G_1=G[V(H_1)\cup \{u\}]$ and $G_2=G[V(H_2)\cup \{u\}]$.

\begin{claim}\label{cl1}
There are no super-heavy vertices in $H_1$.
\end{claim}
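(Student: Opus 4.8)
The plan is to argue by contradiction: suppose some vertex $w\in V(H_1)$ is super-heavy, i.e. $d(w)\ge (n+1)/2$. Since $G-\{u,v\}$ disconnects into $H_1$ and $H_2$ with $|V(H_1)|\le|V(H_2)|$, every neighbor of $w$ lies in $V(H_1)\cup\{u,v\}$, so $d(w)\le (h_1-1)+2 = h_1+1$. Combined with $h_1+h_2 = n-2$ and $h_1\le h_2$, this forces $h_1$ to be large — roughly $h_1\ge (n-3)/2$ — so in fact $h_1$ and $h_2$ are almost equal and both $G_1$ and $G_2$ are "big." The first step is to extract from these inequalities tight structural information: $w$ must be adjacent to $u$, to $v$, and to \emph{almost all} of $H_1$, and $h_1 \ge h_2 - 1$.

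Next I would use the Hamilton cycle $C = uy_1\cdots y_{h_2}vx_{h_1}\cdots x_1 u$ together with the near-domination of $H_1$ by $w$ to produce short cycles through $u$ (or through the $H_1$-side) of every length up to roughly $h_1+2$, and symmetrically long cycles using the $H_2$-side. The idea is that a super-heavy vertex confined to a small side of a cut behaves like a near-universal vertex there, which is exactly the situation in which one gets many cycle lengths for free (compare Lemma~\ref{le1} and the Bondy/Schmeichel-type Lemmas~\ref{le2}–\ref{le4}). Concretely, chords from $w$ to consecutive-or-near-consecutive vertices $x_i$ on $C$ let us shortcut segments of $C[x_{h_1},x_1]$ one vertex at a time, yielding $[3,h_1+2]$-cycles; and since $h_1 \ge h_2-1$ is large, this already covers a long initial range of lengths. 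For the remaining lengths close to $n$, I would combine $w$'s adjacency to both $u$ and $v$ with the two long paths $C[u,v]$ through $H_2$ and through $H_1$, splicing in an appropriate number of $H_1$-vertices via $w$'s chords.

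The main obstacle I anticipate is the "middle range" of cycle lengths — those around $h_1+3$ up to $h_2+1$ or so — where we cannot stay entirely on the small side and cannot yet use the full cycle $C$. Handling this range is where the hypothesis that $G$ is claw-$f_1$-heavy and $Z_2$-$f_1$-heavy must re-enter: with $w$ near-universal on $H_1$, any non-edge inside $H_1$ or between $H_1$ and $\{u,v\}$ tends to create an induced claw centered at $w$ or an induced $Z_2$, forcing a second super-heavy (hence high-degree) vertex on the $H_1$-side — which will contradict the degree bound we already derived, or else give enough extra edges to complete all cycle lengths. So the endgame is: either the structure is rigid enough that $w$ alone yields $[3,n]$-cycles (contradicting non-pancyclicity), or the heavy-subgraph hypotheses force an edge configuration in $G_1$ incompatible with $h_1$ being this small. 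I expect a short case split on whether $w$ is adjacent to \emph{all} of $H_1\cup\{v\}$ or misses exactly one vertex, with the "misses one" case dispatched by locating the forced induced claw/$Z_2$.
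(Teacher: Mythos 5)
You already have everything you need in your second sentence and then walk away from it. From $d(w)\le (h_1-1)+2=h_1+1$ together with $h_1\le h_2$ and $h_1+h_2=n-2$ you get $h_1\le (n-2)/2$, hence $d(w)\le n/2<(n+1)/2$, contradicting the assumption that $w$ is super-heavy. That is the entire proof, and it is exactly the paper's argument: a two-line count, using nothing but the cut structure and $|V(H_1)|\le|V(H_2)|$. Your derived lower bound on $h_1$ is also off: super-heaviness gives $h_1\ge (n+1)/2-1=(n-1)/2$, not ``roughly $(n-3)/2$,'' and $(n-1)/2>(n-2)/2$ is already the contradiction; the two sides of a cut of $n-2$ vertices cannot both be at least $(n-1)/2$ when $h_1$ is the smaller side. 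So the claim needs no appeal to the claw-$f_1$-heavy or $Z_2$-$f_1$-heavy hypotheses, no Hamilton cycle, and no pancyclicity argument.

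The remainder of your proposal is therefore both unnecessary and, as written, not a proof: it is a plan for constructing $[3,n]$-cycles whose hardest part (the ``middle range'' of lengths between roughly $h_1+3$ and $h_2+1$) you explicitly leave open, to be handled by an unspecified claw/$Z_2$ case analysis. Even if that program could be carried out, it would be a far heavier argument than the statement warrants, and it risks circularity in spirit: the delicate cycle-splicing machinery is what the rest of the paper's proof does \emph{after} Claim \ref{cl1} is available, precisely because Claim \ref{cl1} lets later claims conclude that vertices such as $x_1$ are not super-heavy. Tighten the arithmetic in your first step and delete the rest.
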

\begin{proof}
For any vertex $x\in V(H_1)$, $x$ is adjacent to at most $u,v$ and
all the vertices in $H_1$ except for itself. Therefore,
$d(x)\leq d_{H_1}(x)+2\leq h_1-1+2\leq n/2$. Hence $H_1$
contains no super-heavy vertices.
\end{proof}

\begin{claim}\label{cl2}
$N_{G_2}(u)\backslash \{y_1\}\subseteq N(y_1)$.
\end{claim}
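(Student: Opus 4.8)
The plan is a short contradiction argument that exploits claw-$f_1$-heaviness. Suppose the claim fails: there is a vertex $y_j\in N_{G_2}(u)$ with $y_j\neq y_1$ and $y_1y_j\notin E(G)$. I would show that $\{u;y_1,y_j,x_1\}$ induces a claw whose leaf set contains two non-super-heavy vertices, which is impossible.

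First I would verify that $G[\{u,y_1,y_j,x_1\}]\cong K_{1,3}$ with centre $u$, and that it is induced. The four vertices are pairwise distinct, since $u\notin V(H_1)\cup V(H_2)$, $y_j\in V(H_2)$, $x_1\in V(H_1)$, and $V(H_1)\cap V(H_2)=\emptyset$ (note $x_1$ exists because $H_1\neq\emptyset$). The vertex $u$ is joined to $y_1$ and to $x_1$ along the Hamilton cycle $C$, and to $y_j$ because $y_j\in N_{G_2}(u)\subseteq N(u)$. Finally the three leaves are pairwise non-adjacent: $y_1y_j\notin E(G)$ by assumption, while $x_1y_1,x_1y_j\notin E(G)$ since $x_1$ lies in a different component of $G-\{u,v\}$ than $y_1$ and $y_j$. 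Hence $\{u;y_1,y_j,x_1\}$ is an induced claw.

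Next I would show that the leaves $x_1$ and $y_1$ are both non-super-heavy. For $x_1$ this is precisely Claim~\ref{cl1}. For $y_1$, observe that $u$ and $y_1$ are consecutive on the Hamilton cycle $C$, so $d_C(u,y_1)=1$; if $d(u)+d(y_1)\geq n+1$ then Lemma~\ref{le2} would force $G$ to be pancyclic, contrary to our standing assumption, so $d(y_1)\leq n-d(u)\leq n-(n+1)/2=(n-1)/2$, and $y_1$ is not super-heavy. Then the induced claw $\{u;y_1,y_j,x_1\}$ has at least two leaves of degree less than $(n+1)/2$, which contradicts $G$ being claw-$f_1$-heavy (this condition forces at most one leaf of any induced claw to be non-super-heavy). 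This contradiction proves the claim. The argument has no serious obstacle; the only points needing attention are the confirmation that $\{u,y_1,y_j,x_1\}$ is genuinely an \emph{induced} claw (ruling out all three chords among the leaves, handled by the $H_1$--$H_2$ separation) and the observation that $u$ being super-heavy is exactly what makes the Lemma~\ref{le2} bookkeeping for $y_1$ go through.
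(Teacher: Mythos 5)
Your proof is correct and follows essentially the same route as the paper: the same induced claw $\{u;x_1,y_1,y_j\}$, Claim~\ref{cl1} for $x_1$, the claw-$f_1$-heavy condition applied to the leaf pair $\{x_1,y_1\}$, and Lemma~\ref{le2} with $d_C(u,y_1)=1$. The only difference is cosmetic: the paper deduces that $y_1$ is super-heavy and then invokes Lemma~\ref{le2} for the contradiction, whereas you contrapose Lemma~\ref{le2} first to show $y_1$ is not super-heavy and then contradict claw-$f_1$-heaviness.
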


\begin{proof}
If there exists a vertex $y_i\in N_{G_2}(u)\backslash \{y_1\}$
such that $y_iy_1\notin E(G)$, then $\{u;x_1,y_1,y_i\}$ induces
a claw. By Claim \ref{cl1}, $x_1$ is not super-heavy. Since $G$ is
claw-$f_1$-heavy, $y_1$ is super-heavy. Hence $\{u,y_1\}$ is
a super-heavy pair such that $d_{C}(u,y_1)=1$. By Lemma \ref{le2},
$G$ is pancyclic.
\end{proof}

\begin{claim}\label{cl3}
There are no super-heavy pairs with distance one or two along
the orientation of a Hamilton cycle in $G$.
\end{claim}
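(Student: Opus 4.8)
The plan is to obtain Claim~\ref{cl3} as an immediate consequence of Lemmas~\ref{le2} and~\ref{le4}, using the standing hypothesis that $G$ is \emph{not} pancyclic. Suppose, for contradiction, that there exist a Hamilton cycle $C^*$ of $G$ and a super-heavy pair $\{a,b\}$ of $G$ with $d_{C^*}(a,b)\in\{1,2\}$; recall that a super-heavy pair means $d(a)+d(b)\geq n+1$, and since $n\geq 6$ the distance measured along $C^*$ agrees with the ordinary cycle distance, so $a$ and $b$ are the two endpoints of a segment of $C^*$ of length $1$ or $2$.

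If $d_{C^*}(a,b)=1$, then $a$ and $b$ are consecutive on $C^*$, so Lemma~\ref{le2} applies with $x=a$, $y=b$ and $C=C^*$, and we conclude that $G$ is pancyclic --- contradicting our assumption. If instead $d_{C^*}(a,b)=2$, then Lemma~\ref{le4} applies in exactly the same way and again forces $G$ to be pancyclic, a contradiction. In either case we reach a contradiction, so no Hamilton cycle of $G$ carries a super-heavy pair at distance one or two, which is precisely the assertion of Claim~\ref{cl3}.

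\textbf{Expected main obstacle.} There is essentially none: the entire content of the claim is already packaged in the two quoted lemmas, and the only thing that needs checking is the trivial remark that, on $n\geq 6$ vertices, ``distance one or two along the orientation'' of a Hamilton cycle really does mean that the two vertices bound a $P_2$- or $P_3$-segment, so that Lemmas~\ref{le2} and~\ref{le4} are literally applicable. The reason to isolate Claim~\ref{cl3} here is structural rather than technical: later steps of the proof will repeatedly manufacture Hamilton cycles together with a heavy or super-heavy vertex close to another high-degree vertex, and each such configuration can then be dismissed by a single appeal to this claim, which also (via Claim~\ref{cl2} and the claw-/\,$Z_2$-$f_1$-heaviness) constrains which vertices near $u$ on $C$ can be super-heavy.
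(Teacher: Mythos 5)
Your proof is correct and matches the paper's own argument, which likewise deduces the claim immediately from Lemmas~\ref{le2} and~\ref{le4} together with the standing assumption that $G$ is not pancyclic. No further comment is needed.
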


\begin{proof}
Suppose not. By Lemma \ref{le2} or \ref{le4}, $G$ is pancyclic.
\end{proof}

\begin{case}
$h_1=1$.
\end{case}

\begin{subcase}
$uv\in E(G)$.
\end{subcase}
Note that $G$ cannot be bipartite or missing an $(n-1)$ cycle,
so if Lemma \ref{le3} applies to $G$ then $G$ is pancyclic. If $u$ is
adjacent to every vertex in $C$, then $G$ is pancyclic. Now we
can choose a vertex $y_i\in N_{G_2}(u)$ such that $uy_{i+1}\notin E(G)$.
Let $y_j$ be the first vertex on $C[y_i,y_{h_2}]$ such that
$uy_{j+1}\in E(G)$, where assume that $y_{h_2+1}=v$.
Obviously, $j\geq i+1$.
\begin{claim}\label{cl4}
$i\geq 2$.
\end{claim}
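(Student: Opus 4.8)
The plan is to argue by contradiction: suppose $i=1$. Since $uy_1$ is an edge of $C$, the vertex $y_1$ automatically lies in $N_{G_2}(u)$, so under the ordering of $C$ the hypothesis $i=1$ says exactly that $uy_2\notin E(G)$; I will show that this makes $G$ pancyclic, contradicting our standing assumption.

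I would first pin down the local picture. As $h_1=1$ and $G$ is $2$-connected, the single vertex $x_1$ of $H_1$ is adjacent to precisely $u$ and $v$; hence $d(x_1)=2$, so $x_1$ is not super-heavy, and $ux_1vu$ is a triangle. Also $h_2=n-3\ge 3$, so $u,x_1,v,y_1,y_2$ (indeed also $y_3$) are pairwise distinct. From Claim \ref{cl3} I extract the key fact that, because $u$ is super-heavy and $C$ carries no super-heavy pair within distance $2$, every vertex at $C$-distance at most $2$ from $u$ is non-super-heavy; in particular $y_1$, $y_2$ and $v$ are non-super-heavy. I would also note, by a degree count, that $u$ has a neighbour $y_k$ with $3\le k\le h_2$ (since $d(u)\ge(n+1)/2$ forces $|N(u)\setminus\{x_1,v,y_1\}|\ge 1$ while $uy_2\notin E(G)$), so by Claim \ref{cl2} $y_1y_k\in E(G)$.

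The argument then splits on the edges $vy_1,vy_2$. If $vy_1\in E(G)$: by Claim \ref{cl2}, $y_1$ is adjacent to all of $N_{G_2}(u)\setminus\{y_1\}$, a set of size $\ge d(u)-3\ge(n-5)/2$, and in addition $y_1$ is adjacent to $u$, to $y_2$, and to $v$, none of which lies in $N_{G_2}(u)\setminus\{y_1\}$ (for $y_2$ this uses $uy_2\notin E(G)$); hence $d(y_1)\ge(n+1)/2$, so $\{u,y_1\}$ is a super-heavy pair with $d_C(u,y_1)=1$, contradicting Claim \ref{cl3} via Lemma \ref{le2}. If $vy_1\notin E(G)$ and $vy_2\notin E(G)$: then $\{u,x_1,v;y_1,y_2\}$ induces a $Z_2$ (degree-$3$ vertex $u$, triangle $ux_1v$, pendant path $u\,y_1\,y_2$), and $\{x_1,y_1\}$ is a distance-$2$ pair of this $Z_2$ whose two ends are both non-super-heavy, contradicting that $G$ is $Z_2$-$f_1$-heavy. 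The remaining case is $vy_1\notin E(G)$, $vy_2\in E(G)$, which I would first attack by exhibiting the claw $\{v;x_1,y_2,y_{h_2}\}$: it is induced as soon as $y_2y_{h_2}\notin E(G)$, and then $\{x_1,y_2\}$ is again a distance-$2$ pair of non-super-heavy vertices, contradicting that $G$ is $K_{1,3}$-$f_1$-heavy. When $y_2y_{h_2}\in E(G)$ I would reroute $C$ using the chords $vy_2$, $y_2y_{h_2}$ and the edge $uy_k$ (together with $y_1y_k\in E(G)$ from Claim \ref{cl2}) — recalling that, since $d(x_1)=2$, every Hamilton cycle of $G$ passes through $u,x_1,v$ consecutively — to produce a Hamilton cycle on which some super-heavy vertex lies at distance at most $2$ from $u$, and conclude by Lemmas \ref{le2} and \ref{le4} as in Claim \ref{cl3}.

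The step I expect to be the main obstacle is precisely this last subcase ($vy_1\notin E(G)$, $vy_2\in E(G)$, $y_2y_{h_2}\in E(G)$, together with its variants according to where the remaining $y$-neighbours of $u$ sit): no forbidden induced subgraph is visible on the nose, so one must manufacture the right Hamilton cycle by hand, carefully tracking which $y_j$ are adjacent to $u$ and — via Claim \ref{cl2} — to $y_1$, and exploiting the near-extremal degree bounds forced on $u$ and $y_1$ above, until a super-heavy pair surfaces at $C$-distance $1$ or $2$ on some Hamilton cycle.
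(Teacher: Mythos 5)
Your proposal is incomplete at exactly the point you flag yourself: the subcase $vy_1\notin E(G)$, $vy_2\in E(G)$, $y_2y_{h_2}\in E(G)$ is never actually settled. "Reroute $C$ using the chords $vy_2$, $y_2y_{h_2}$ and $uy_k$ until a super-heavy pair surfaces at distance at most $2$ on some Hamilton cycle" is a hope, not an argument — no new Hamilton cycle is exhibited, and it is not clear that one with the required property exists, since all you know a priori is that $y_1,y_2,v$ are \emph{not} super-heavy and you have no super-heavy vertex other than $u$ in hand. So as written the proof of the claim has a genuine hole. (The parts you do carry out — the degree count when $vy_1\in E(G)$, the induced $Z_2$ on $\{u,x_1,v;y_1,y_2\}$ when $vy_1,vy_2\notin E(G)$, and the claw $\{v;x_1,y_2,y_{h_2}\}$ — are correct.)

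The missing idea is that you do not need a \emph{super}-heavy pair here, and hence you do not need the case split on $v$ at all. Assuming $uy_2\notin E(G)$, Claim \ref{cl2} gives $N_{G_2}(u)\setminus\{y_1\}\subseteq N(y_1)$, and since $u$ and $y_2$ lie in $N(y_1)$ but not in that set, $d(y_1)\geq (d(u)-3)+2\geq (n-1)/2$, so $\{u,y_1\}$ is a heavy pair (degree sum at least $n$) with $d_C(u,y_1)=1$. This is exactly where Lemma \ref{le3} enters: its exceptional graphs are excluded because, in Subcase 1.1, $uv\in E(G)$, so $uvx_1u$ is a triangle ($G$ is not bipartite) and $uy_1\cdots y_{h_2}vu$ is an $(n-1)$-cycle; hence $G$ is pancyclic, contradicting the standing assumption, and therefore $uy_2\in E(G)$ and $i\geq 2$. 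Your $vy_1\in E(G)$ case is this same count strengthened by one extra neighbour; once you invoke Lemma \ref{le3} together with the non-bipartite/$(n-1)$-cycle observation already recorded at the start of Subcase 1.1, the weaker bound closes every case uniformly and the problematic rerouting subcase disappears.
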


\begin{proof}
Assume there exists $y\in V(H_2)$ such that $y_1y\in E(G)$
and $uy\notin E(G)$. By Claim \ref{cl2}, we have
$N_{G_2}(u)\backslash \{y_1\}\subset N(y_1)$.
Since $d(u)\geq (n+1)/2$ and $u,y\in N(y_1)\backslash N(u)$,
$d(y_1)\geq d(u)-3+2\geq (n-1)/2$. Therefore, $\{u,y_1\}$ is a
heavy-pair such that $d_{C}(u,y_1)=1$. By Lemma \ref{le3},
$G$ is pancyclic. Also, since $y_1y_2\in E(G)$, then
$uy_2\in E(G)$ and $i\geq 2$.
\end{proof}
Next we assume that $i\leq h_2-2$. Note that
$y_i,y_{i+1},y_{i+2}\in C[y_2,y_{h_2}]$.

\begin{claim}\label{cl5}
$j\geq i+2$.
\end{claim}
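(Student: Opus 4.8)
The plan is to argue by contradiction: suppose $j=i+1$. Then, by the choice of $y_j$, we have $uy_{i+2}\in E(G)$, while $uy_i\in E(G)$, $uy_{i+1}\notin E(G)$, and $y_i,y_{i+1},y_{i+2}\in C[y_2,y_{h_2}]$ since $i\le h_2-2$. The first step is to read off adjacencies at $y_1$: because $i\ge 2$ (Claim~\ref{cl4}) and $i+2>1$, Claim~\ref{cl2} gives $y_1y_i\in E(G)$ and $y_1y_{i+2}\in E(G)$; and since $uy_{i+1}\notin E(G)$, the argument proving Claim~\ref{cl4} forces $y_1y_{i+1}\notin E(G)$ (otherwise $G$ would be pancyclic).

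Next I would use the chords $uy_i$ and $y_1y_{i+2}$ to build the $(n-1)$-cycle $C^{*}=u\,y_i\,y_{i-1}\cdots y_1\,y_{i+2}\,y_{i+3}\cdots y_{h_2}\,v\,x_1\,u$, which omits exactly the vertex $y_{i+1}$. Since $G$ is not pancyclic, Lemma~\ref{le1} then yields $d(y_{i+1})<n/2$; in particular $y_{i+1}$ is not heavy.

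Then I would split according to whether $y_iy_{i+2}\in E(G)$. If $y_iy_{i+2}\notin E(G)$, then $\{u;x_1,y_i,y_{i+2}\}$ induces a claw (recall $x_1$ has no neighbour in $H_2$), so by Claim~\ref{cl1} and claw-$f_1$-heaviness at least one of $y_i,y_{i+2}$ is super-heavy; I would then reroute $C$, using the chords above together with the edges $uv$ and $ux_1$ and, if needed, a further induced claw or $Z_2$ produced by that super-heavy vertex, into a Hamilton cycle on which two super-heavy vertices lie at distance at most two, contradicting Claim~\ref{cl3}. If $y_iy_{i+2}\in E(G)$, then $y_iy_{i+1}y_{i+2}$ is a triangle which, together with the edges from $u$, $y_1$ and $x_1$ to its vertices and the non-edges $uy_{i+1}$ and $y_1y_{i+1}$, yields an induced claw or $Z_2$; the forced super-heavy vertex, combined with $d(y_{i+1})<n/2$ and a second $(n-1)$-cycle obtained by short-cutting $y_iy_{i+1}y_{i+2}$, again contradicts Claim~\ref{cl3} or lets one of Lemmas~\ref{le1}--\ref{le3} apply.

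The main obstacle is the case $y_iy_{i+2}\notin E(G)$: knowing only that \emph{one} of $y_i,y_{i+2}$ is super-heavy is not by itself a contradiction, and the naive reversal of the arc $u\,y_1\cdots y_i$ of $C$ is blocked by $y_1y_{i+1}\notin E(G)$, so one has to exploit the fact (coming from Claims~\ref{cl2} and~\ref{cl4}) that $u$ and $y_1$ have the same neighbourhood inside $H_2$ in order to find a rerouting that works, while also checking, before invoking Lemmas~\ref{le2}--\ref{le4}, that $G$ is neither bipartite nor missing only an $(n-1)$-cycle.
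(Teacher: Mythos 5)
Your setup is fine (negating to $j=i+1$, reading off $uy_i,uy_{i+2}\in E(G)$, $uy_{i+1}\notin E(G)$, and using the Claim~\ref{cl4} argument to rule out $y_1y_{i+1}\in E(G)$), and your cycle $C^{*}$ through the chords $uy_i$ and $y_1y_{i+2}$ is a correct $(n-1)$-cycle avoiding $y_{i+1}$. But the two case analyses are where the proof has to happen, and both are left open. In the case $y_iy_{i+2}\notin E(G)$ you extract only that \emph{one} of $y_i,y_{i+2}$ is super-heavy and then declare the needed ``rerouting'' to be the main unresolved obstacle. There is no obstacle: the $f_1$-heavy condition is imposed on \emph{every} pair at distance two inside the induced claw $\{u;x_1,y_i,y_{i+2}\}$, and since we are in Case~1 ($h_1=1$) the vertex $x_1$ has degree $2$, so the pairs $(x_1,y_i)$ and $(x_1,y_{i+2})$ force \emph{both} $y_i$ and $y_{i+2}$ to be super-heavy. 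As $d_C(y_i,y_{i+2})=2$, this contradicts Claim~\ref{cl3} immediately (via Lemma~\ref{le4}); this is exactly how the paper disposes of this case, with no rerouting of $C$ at all. So this half of your argument is incomplete only because you under-used the hypothesis, but as written it is a genuine gap.

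The case $y_iy_{i+2}\in E(G)$ is also not proved: ``yields an induced claw or $Z_2$'' is never instantiated (note, e.g., that $\{u,y_i,y_{i+2};x_1,v\}$ is not induced since $uv\in E(G)$ in this subcase), and ``contradicts Claim~\ref{cl3} or lets one of Lemmas~\ref{le1}--\ref{le3} apply'' is not an argument — two $(n-1)$-cycles do not give the cycles of lengths $3,\dots,n-2$. The paper's actual mechanism is quantitative: since $uy_{i+1}\notin E(G)$ and $y_1y_{i+1}\notin E(G)$, in $G'=G-y_{i+1}$ one has $d_{G'}(u)+d_{G'}(y_1)=d(u)+d(y_1)\geq (n+1)/2+(n-3)/2=n-1=|V(G')|$, and the chord $y_iy_{i+2}$ produces a Hamilton cycle of $G'$ on which $u$ and $y_1$ are consecutive; Lemma~\ref{le3} then makes $G'$ pancyclic, and the original Hamilton cycle $C$ supplies the $n$-cycle. (Your $C^{*}$ cannot be substituted directly here, since $u$ and $y_1$ are not consecutive on it.) You would need to supply both the degree-sum estimate for $y_1$ (coming from Claim~\ref{cl2}) and this specific choice of cycle to close the case; neither appears in the proposal.
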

\begin{proof}
Assume that $j=i+1$. First, we have
$uy_i,uy_{i+2}\in E(G)$ and $uy_{i+1}\notin E(G)$.

If $y_iy_{i+2}\notin E(G)$, then $\{u;x_1,y_i,y_{i+2}\}$
induces a claw. Since $d(x_1)=2<(n+1)/2$ and $G$ is
claw-$f_1$-heavy, $\{y_i,y_{i+2}\}$ is a super-heavy
pair such that $d_{C}(y_i,y_{i+2})=2$, which contradicts
to Claim \ref{cl3}.

Now assume that $y_iy_{i+2}\in E(G)$. If $y_1y_{i+1}\in E(G)$,
then it follows $d(y_1)\geq (n-1)/2$ from Claim \ref{cl2}.
Hence $\{u,y_1\}$ is a heavy pair with $d_{C}(u,y_1)=1$,
and $G$ is pancyclic by Lemma \ref{le3}. Therefore,
$y_1y_{i+1}\notin E(G)$. We set $G'=G-y_i$. Clearly,
$C'=C[y_{i+2},y_i]y_iy_{i+2}$ is a Hamilton cycle in $G'$. Moreover,
$u,y_1$ satisfy that $d_{G'}(u)+d_{G'}(y_1)=d(u)+d(y_1)\geq (n+1)/2+(n-3)/2=n-1$
and $d_{C'}(u,y_1)=1$. By Lemma \ref{le3}, $G'$ is
pancyclic. Together with the cycle $C$, $G$ is pancyclic.
\end{proof}
By Claim \ref{cl5}, we obtain $uy_{i+2}\notin E(G)$.

\begin{claim}\label{cl6}
$vy_{i+1}\in E(G)$.
\end{claim}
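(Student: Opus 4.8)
\medskip
\noindent\textbf{Proof proposal.}
The plan is to argue by contradiction: assume $vy_{i+1}\notin E(G)$. The facts in hand are $uy_i\in E(G)$, $uy_{i+1},uy_{i+2}\notin E(G)$ (the last by Claim~\ref{cl5}), $uv,ux_1,vx_1\in E(G)$, and $x_1$ is adjacent to no vertex of $V(H_2)$, so $d(x_1)=2<(n+1)/2$; moreover $y_i\in N(y_1)$ by Claim~\ref{cl2} (as $y_i\in N_{G_2}(u)\setminus\{y_1\}$), and $u$ is adjacent on $C$ to both $y_1$ and $x_1$. Two preliminary reductions are used throughout. First, since $h_1=1$ we have $C=uy_1\cdots y_{h_2}vx_1u$, hence $d_C(u,v)=2$; by Claim~\ref{cl3} this already forbids $v$ from being super-heavy. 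Second, we may assume $y_1y_{i+1}\notin E(G)$ and $y_1y_{i+2}\notin E(G)$: if either held, then combining it with Claim~\ref{cl2} and $uy_1\in E(G)$ gives $d(y_1)\geq d(u)-1\geq(n-1)/2$, so $\{u,y_1\}$ is a heavy pair with $d_C(u,y_1)=1$, and Lemma~\ref{le3} yields pancyclicity since $G$ contains the triangle $ux_1v$ and the $(n-1)$-cycle $uy_2\cdots y_{h_2}vx_1u$ (here we use $uy_2\in E(G)$ from Claim~\ref{cl4}). The driving idea is then: from the local structure around $y_i,y_{i+1},y_{i+2}$ together with $u,v,x_1$, exhibit an induced $K_{1,3}$ or $Z_2$ in which a vertex of degree $2$ is one of the distance-two vertices; $\{K_{1,3},Z_2\}$-$f_1$-heaviness forces the partner to be super-heavy, and Claim~\ref{cl3} is then invoked through one of the short $C$-distances $d_C(u,v)=2$, $d_C(u,y_1)=1$, $d_C(y_i,y_{i+1})=1$, $d_C(y_i,y_{i+2})=2$, $d_C(y_{i+1},y_{i+2})=1$.

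Suppose first $y_iy_{i+2}\notin E(G)$. If $vy_i\notin E(G)$, then $\{u,v,x_1;y_i,y_{i+1}\}$ induces a $Z_2$ with $u$ the vertex of degree $3$ (the needed non-edges $uy_{i+1},vy_i,vy_{i+1},x_1y_i,x_1y_{i+1}$ all hold), and the distance-two pair $(x_1,y_i)$ forces $y_i$ super-heavy; then $\{y_i,u,y_1;y_{i+1},y_{i+2}\}$ induces a $Z_2$ with $y_i$ of degree $3$ (using $y_i\in N(y_1)\cap N(u)$, $u\not\sim y_{i+1},y_{i+2}$, $y_1\not\sim y_{i+1},y_{i+2}$, $y_i\not\sim y_{i+2}$), whose distance-two pair $(y_1,y_{i+1})$ makes $y_1$ or $y_{i+1}$ super-heavy, so $\{u,y_1\}$ or $\{y_i,y_{i+1}\}$ is a super-heavy pair at $C$-distance $1$, contradicting Claim~\ref{cl3}. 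If $vy_i\in E(G)$, then $uvy_i$ is a triangle: when $vy_{i+2}\notin E(G)$, $\{y_i,u,v;y_{i+1},y_{i+2}\}$ induces a $Z_2$ with $y_i$ of degree $3$, and its distance-two pairs force one of $d(v),d(y_{i+1})$ and one of $d(y_i),d(y_{i+2})$ to be $\geq(n+1)/2$; every combination produces a super-heavy pair at $C$-distance $\le2$ (recall $d_C(u,v)=2$, $d_C(y_i,y_{i+1})=d_C(y_{i+1},y_{i+2})=1$), contradicting Claim~\ref{cl3}; when $vy_{i+2}\in E(G)$, the non-edge $y_iy_{i+2}$ makes $\{v;y_i,y_{i+2},x_1\}$ an induced claw, forcing both $y_i$ and $y_{i+2}$ super-heavy, so $\{y_i,y_{i+2}\}$ is a super-heavy pair with $d_C(y_i,y_{i+2})=2$, again a contradiction.

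It remains to treat $y_iy_{i+2}\in E(G)$. Then $y_iy_{i+1}y_{i+2}$ is a triangle and $\{y_i,y_{i+1},y_{i+2};u,x_1\}$ induces a $Z_2$ with $y_i$ of degree $3$, so the distance-two pair $(y_i,x_1)$ forces $y_i$ super-heavy. The chord $y_iy_{i+2}$ yields the $(n-1)$-cycle $C^{\ast}=uy_1\cdots y_iy_{i+2}\cdots y_{h_2}vx_1u$ of $G$ omitting only $y_{i+1}$. By Lemma~\ref{le1} we may assume $d(y_{i+1})<n/2$; by Lemma~\ref{le3} along $C$ with the pair $\{y_i,y_{i+1}\}$ (and the existence of $C^{\ast}$, together with the triangle $ux_1v$) we may assume $d(y_i)+d(y_{i+1})<n$, hence $d(y_{i+1})<(n-1)/2$; and $y_{i-1}y_{i+1}\notin E(G)$, else $G$ has an $(n-1)$-cycle omitting the super-heavy vertex $y_i$ and Lemma~\ref{le1} applies. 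One then works inside $G-y_{i+1}$ with the Hamilton cycle $C^{\ast}$, noting $d_{G-y_{i+1}}(y_i)=d(y_i)-1\geq(n-1)/2$ and that $y_{i-1},y_{i+2}$ are the $C^{\ast}$-neighbours of $y_i$; since $G-y_{i+1}$ is non-pancyclic (else $G$ would be, via $C$), not bipartite (it still contains $ux_1v$), and not missing only its second-longest cycle (the chord $uy_2$ of $C^{\ast}$ produces an $(n-2)$-cycle of $G$ lying in $G-y_{i+1}$), Lemmas~\ref{le2}--\ref{le3} applied to $\{y_i,y_{i-1}\}$ and $\{y_i,y_{i+2}\}$ pin down $d(y_{i-1})$ and $d(y_{i+2})$, and a further $Z_2$ built on the triangle $y_iy_{i+1}y_{i+2}$ with a pendant path along $C$ then forces a second super-heavy vertex at small $C$-distance from $y_i$, the final contradiction with Claim~\ref{cl3}.

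I expect the decisive difficulty to be exactly this last case $y_iy_{i+2}\in E(G)$. Producing ``$y_i$ super-heavy'' is immediate, but $u$ and $y_i$ sit at $C$-distance $i$, which may exceed $2$, so Claim~\ref{cl3} does not bite on $C$ itself; one has to reroute to $C^{\ast}$ and argue inside $G-y_{i+1}$, and the delicate points are that the degree bounds must survive deleting $y_{i+1}$ and that $G-y_{i+1}$ is neither bipartite nor missing only its second-longest cycle — the latter requiring an explicit $(n-2)$-cycle of $G$ manufactured from chords ($uy_2$, say) already available. Making this bookkeeping air-tight, rather than the construction of the forbidden subgraphs, is where the real work lies.
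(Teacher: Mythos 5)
Your first two cases are correct and essentially parallel the paper's Subclaims in its proof of this claim: when $y_iy_{i+2}\notin E(G)$ your treatment of $vy_i\in E(G)$ (splitting on $vy_{i+2}$) matches the paper's arguments, and your handling of $vy_i\notin E(G)$ via the induced $Z_2$ on $\{y_i,u,y_1;y_{i+1},y_{i+2}\}$ is a legitimate variant (the paper instead works with $y_{i-1}$, which costs it the extra facts $uy_{i-1}\in E(G)$, $y_{i-1}y_{i+1}\notin E(G)$, etc.); your preliminary reductions ($v$ not super-heavy via $d_C(u,v)=2$, and $y_1y_{i+1},y_1y_{i+2}\notin E(G)$ via Claim~\ref{cl2} and Lemma~\ref{le3}) are also sound.

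The genuine gap is the case $y_iy_{i+2}\in E(G)$, which you only sketch. Deriving that $y_i$ is super-heavy, that $d(y_{i+1})<(n-1)/2$, that $y_{i-1}y_{i+1}\notin E(G)$, and (via Lemmas~\ref{le2}--\ref{le3} in $G-y_{i+1}$) that $y_{i-1},y_{i+2}$ are not super-heavy is fine, but your concluding step --- ``a further $Z_2$ built on the triangle $y_iy_{i+1}y_{i+2}$ with a pendant path along $C$'' --- is never constructed. Such a $Z_2$ (say $\{y_i,y_{i+1},y_{i+2};y_{i-1},y_{i-2}\}$ or one extending past $y_{i+2}$) requires non-edges like $y_{i-1}y_{i+2}$, $y_{i-2}y_i$, $y_{i-2}y_{i+1}$, $y_{i-2}y_{i+2}$ (or their analogues on the other side), none of which you have established; any of them may be a chord, and each chord demands its own surgery argument (compare the paper's handling of $y_{i-1}y_{i+2}\in E(G)$ by deleting $y_{i+1}$ and applying Lemma~\ref{le1} to $G-y_{i+1}$). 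So as written the case is not closed. Ironically, your own setup already contains the short completion the paper uses: since $uy_{i+1}\notin E(G)$ and $y_1y_{i+1}\notin E(G)$, Claim~\ref{cl2} gives $d(u)+d(y_1)\geq 2d(u)-2\geq n-1=|G-y_{i+1}|$, and $u,y_1$ are adjacent on the Hamilton cycle $C^{\ast}$ of $G-y_{i+1}$; since $G-y_{i+1}$ contains the triangle $uvx_1$ and the $(n-2)$-cycle $vuy_1\cdots y_iy_{i+2}\cdots y_{h_2}v$, Lemma~\ref{le3} makes $G-y_{i+1}$ pancyclic, and together with $C$ so is $G$ --- no further $Z_2$ hunting is needed.
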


\begin{proof}
Assume that $vy_{i+1}\notin E(G)$.

\begin{subclaim}\label{su1}
$vy_{i+2}\notin E(G)$.
\end{subclaim}

\begin{proof}
Assume that $vy_{i+2}\in E(G)$. Then $\{v,x_1,u;y_{i+2},y_{i+1}\}$
induces a $Z_2$. If $v$ is a super-heavy vertex, then
$\{u,v\}$ is a super-heavy pair such that $d_{C}(u,v)=2$,
contradicting to Claim \ref{cl3}. Now assume that $v$ is
not super-heavy. Note that $x_1$ is not super-heavy.
Since $G$ is $Z_2$-$f_1$-heavy, $\{y_{i+1},y_{i+2}\}$
is a super-heavy pair such that $d_C(y_{i},y_{i+1})=1$,
contradicting to Claim \ref{cl3}.
\end{proof}

\begin{subclaim}\label{su2}
$vy_{i}\notin E(G)$.
\end{subclaim}

\begin{proof}
Assume that $vy_{i}\in E(G)$. By Claim \ref{su1}, we
have $vy_{i+2}\notin E(G)$. Note that $vy_{i+1}\notin E(G)$
by the initial hypothesis. If $y_iy_{i+2}\notin E(G)$,
then $\{y_i,u,v;y_{i+1},y_{i+2}\}$ induces a $Z_2$.
Since $v$ is not super-heavy, $y_{i+1}$ is super-heavy.
Hence either $\{y_{i+1},y_{i+2}\}$ or  $\{y_{i+1},y_i\}$
is a super-heavy pair, a contradiction by Claim \ref{cl3}.
If $y_iy_{i+2}\in E(G)$, then $\{y_i,y_{i+1},y_{i+2};v,x_1\}$
induces a $Z_2$. Since $v$ is not super-heavy,
$\{y_{i+1}, y_{i+2}\}$ is a super-heavy pair
such that $d_C(y_i,y_{i+1})=1$, a contradiction
by Claim \ref{cl3}.
\end{proof}

\begin{subclaim}\label{su3}
$y_i$ is super-heavy.
\end{subclaim}

\begin{proof}
By Claims \ref{su2} and the initial hypothesis, we have $vy_i\notin E(G)$
and $vy_{i+1}\notin E(G)$. Since $\{u,v,x_1;y_i,y_{i+1}\}$ induces a
$Z_2$ and $x_1$ is not super-heavy, $y_i$ is super-heavy.
\end{proof}
By Claim \ref{cl4}, we have $i\geq 2$, and this implies $y_{i-1}$
is well-defined.
\begin{subclaim}\label{su4}
$y_{i-1}y_{i+1}\notin E(G)$, $uy_{i-1}\in E(G)$, $y_iy_{i+2}\notin E(G)$
and $y_{i-1}y_{i+2}\notin E(G)$.
\end{subclaim}

\begin{proof}
By Claim \ref{su3}, $y_i$ is super-heavy. If $y_{i-1}y_{i+1}\in E(G)$,
then $G$ is pancyclic by Lemma \ref{le1}.

If $uy_{i-1}\notin E(G)$, then $\{y_i;y_{i-1},y_{i+1},u\}$ induces
a claw. Hence either $y_{i-1}$ or $y_{i+1}$ is super-heavy. Therefore,
either $\{y_{i-1},y_i\}$ or $\{y_i,y_{i+1}\}$ is a super-heavy
pair such that $d_C(y_{i-1},y_i)=d_C(y_i,y_{i+1})=1$, a
contradiction by Claim \ref{cl3}.

By Claim \ref{cl2} and Lemma \ref{le3}, $y_1y_{i+1}\notin E(G)$.
If $y_iy_{i+2}\in E(G)$, then set $G'=G-y_{i+1}$. Now
$C'=vx_1uy_1\ldots y_iy_{i+2}\ldots y_{h_2}v$ is a Hamilton cycle
in $G'$, and $d_{G'}(u)+d_{G'}(y_1)\geq n-1=|G'|$ by Claim 2.
By Lemma \ref{le3}, $G'$ is either pancyclic, bipartite, or
missing only an $(n-2)$-cycle.  Since
$C'=vx_1uy_1\cdots y_iy_{i+2}\cdots y_{h_2}v$ is an $(n-1)$-cycle
and $C''=vuy_1\cdots y_iy_{i+2}\cdots y_{h_2}v$ is an $(n-2)$-cycle
in $G'$, $G'$ is pancyclic. Therefore, $G$ is pancyclic.

If $y_{i-1}y_{i+2}\in E(G)$, then set $G'=G-y_{i+1}$. Now
$C'=uy_1\ldots y_{i-1}y_{i+2}\ldots y_{h_2}vx_1u$ is a Hamilton
cycle in $G''=G'-y_i$ and $d_{G'}(y_i)\geq(n-1)/2=|G'|/2$.
By Lemma \ref{le1}, $G'$ is pancyclic. Together with the cycle
$C$, $G$ is pancyclic.
\end{proof}
By Claim \ref{su4}, $\{y_i,u,y_{i-1};y_{i+1},y_{i+2}\}$
induces a $Z_2$. Since $G$ is $Z_2$-$f_1$-heavy, either
$y_{i-1}$ or $y_{i+1}$ is super-heavy. Then either
$\{y_{i-1},y_i\}$ or $\{y_{i+1},y_i\}$ is a super-heavy
pair such that $d_C(y_{i-1},y_i)=d_C(y_{i+1},y_i)=1$.
By Claim \ref{cl3}, a contradiction.
\end{proof}

\begin{claim}\label{cl7}
For any $k\in \{i+1,\cdots,j\}$, $vy_k\in E(G)$.
\end{claim}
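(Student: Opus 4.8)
\emph{Plan.} I would prove Claim~\ref{cl7} by induction on $k$. The base case $k=i+1$ is precisely Claim~\ref{cl6}, so I fix $k$ with $i+2\le k\le j$, assume $vy_m\in E(G)$ for all $m$ with $i+1\le m\le k-1$, and suppose for contradiction that $vy_k\notin E(G)$.

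The observation that makes everything go is that, since $h_1=1$ and $G$ is $2$-connected, the vertex $x_1$ has $N(x_1)=\{u,v\}$; in particular $d(x_1)=2$, so $x_1$ is not super-heavy, and $x_1$ is adjacent to neither $y_{k-1}$ nor $y_k$. I would also note that $uy_{k-1},uy_k\notin E(G)$ by the choice of $j$ (here $i+1\le k-1\le k\le j$), while $uv,ux_1,vx_1\in E(G)$ by Subcase~1, $vy_{k-1}\in E(G)$ by the induction hypothesis, and $y_{k-1}y_k\in E(G)$ since $y_{k-1},y_k$ are consecutive on $C$. Running through the remaining pairs of $\{v,x_1,u,y_{k-1},y_k\}$, the only edges present are $vx_1$, $x_1u$, $vu$, $vy_{k-1}$, $y_{k-1}y_k$, so $\{v,x_1,u;y_{k-1},y_k\}$ induces a $Z_2$ with $v$ the vertex of degree $3$ and $v$--$y_{k-1}$--$y_k$ the attached path. (If $k=h_2$ then $vy_k=vy_{h_2}\in E(G)$ already and the assumption is void; this is not needed but reassures that $y_k$ is an ordinary interior vertex of the segment.)

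Now I apply the $Z_2$-$f_1$-heavy hypothesis to this induced $Z_2$. From the distance-two pair $\{x_1,y_{k-1}\}$ and the fact that $x_1$ is not super-heavy, $y_{k-1}$ is super-heavy. From the distance-two pair $\{v,y_k\}$ we get $\max\{d(v),d(y_k)\}\ge(n+1)/2$. If $v$ is super-heavy, then since $u$ is super-heavy, $\{u,v\}$ is a super-heavy pair with $d_C(u,v)=2$, contradicting Claim~\ref{cl3}; otherwise $y_k$ is super-heavy, and then $\{y_{k-1},y_k\}$ is a super-heavy pair with $d_C(y_{k-1},y_k)=1$, again contradicting Claim~\ref{cl3}. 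Either way we reach a contradiction, so $vy_k\in E(G)$ and the induction closes.

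I do not expect a serious obstacle here: the only nontrivial point is recognising the right induced $Z_2$, namely that the triangle $uvx_1$ is handed to us by Subcase~1 together with $x_1$'s being a degree-$2$ vertex attached only to the cut pair $\{u,v\}$, and that this same degree-$2$ constraint is exactly what makes the five-vertex subgraph induced without any control over the edges among $y_1,\dots,y_{h_2}$. After that, the heavy-pair bookkeeping plus Claim~\ref{cl3} is routine, and the boundary cases ($k=i+1$, absorbed into Claim~\ref{cl6}, and $k=h_2$, where $vy_k$ is automatic) are harmless.
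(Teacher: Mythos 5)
Your proof is correct and follows essentially the same route as the paper: the paper takes the first vertex $y_t$ on $C[y_{i+2},y_j]$ with $vy_t\notin E(G)$ (your induction in disguise), observes that $\{v,x_1,u;y_{t-1},y_t\}$ induces a $Z_2$, and uses $Z_2$-$f_1$-heaviness together with Claim \ref{cl3} exactly as you do. Your explicit handling of the case that $v$ is super-heavy (via $d_C(u,v)=2$) is a point the paper leaves implicit, having dispatched it earlier inside the proof of Claim \ref{cl6}, but the substance is identical.
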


\begin{proof}
By Claim \ref{cl6}, we have $vy_{i+1}\in E(G)$. Now we
show that $vy_k\in E(G)$ for any $k\in \{i+2,\cdots,j\}$.
Otherwise, assume that $y_t$ is the first vertex on
$C[y_{i+2},y_j]$ such that $vy_t\notin E(G)$. Note that
for any $k\in \{i+1,\cdots,j\}$, $uy_k\notin E(G)$.
We have $uy_{t-1},uy_t\notin E(G)$. Then $\{v,x_1,u;y_{t-1},y_t\}$
induces a $Z_2$. Since $x_1,v$ are not super-heavy,
$\{y_{t-1},y_t\}$ is a super-heavy pair such that
$d_C(y_{t-1},y_t)=1$. By Claim \ref{cl3}, a
contradiction, hence $vy_k\in E(G)$.
\end{proof}

Note that since $j\geq i+2$ and $i$ could be selected
to be $\leq h_2-2$, then if $(j+1)\leq h_2-2$, let $i=j+1$
and repeat the previous arguments to conclude that for any
vertex $y\in \{y_2,y_3,\cdots,y_{h_2-2}\}$ such that
$uy\notin E(G)$, we have $vy\in E(G)$. Hence
$d_{C[y_1,y_{h_2-2}]}(u)+d_{C[y_1,y_{h_2-2}]}(v)\geq h_2-2$.
If $uy_{h_2-1}\in E(G)$ or $vy_{h_2-1}\in E(G)$ or
$uy_{h_2}\in E(G)$, then $d_{G_2}(u)+d_{G_2}(v)\geq h_2=n-3$.
This implies that $d(u)+d(v)\geq n+1$. By Claim \ref{cl3},
a contradiction. Otherwise, assume that
$uy_{h_2-1},uy_{h_2}\notin E(G)$ and $vy_{h_2-1}\notin E(G)$.
Then $\{v,x_1,u;y_{h_2},y_{h_2-1}\}$ induces a $Z_2$.
It follows that $\{y_{h_2},y_{h_2-1}\}$ is a super-heavy pair
such that $d_C(y_{h_2-1},y_{h_2})=1$, contradicting to Claim \ref{cl3}.

\begin{subcase}
$uv\notin E(G)$.
\end{subcase}
By Claim \ref{cl2}, $N_{G_2}(u)\backslash \{y_1\}\subseteq N(y_1)$.
If $uy_2\notin E(G)$, then since $u$ is super-heavy and $u,y_2\in N(y_1)\backslash N(u)$,
$y_1$ is super-heavy. Hence $\{u,y_1\}$ is a super-heavy pair such that
$d_C(u,y_1)=1$, a contradiction by Claim \ref{cl3}. If $uy_2\in E(G)$,
then we have $d(y_1)\geq (n-1)/2$ and $\{u,y_1\}$ is a heavy-pair
such that $d_C(u,y_1)=1$. By Lemma \ref{le3}, $G$ is either pancyclic,
bipartite, or missing only an $(n-1)$-cycle. The cycle $uy_1y_2u$
(a triangle) is odd, so $G$ is not bipartite. Since
$C'=ux_1vy_{h_2},\ldots,y_2u$ is an $(n-1)$-cycle, $G$ is
pancyclic.
\begin{case}
$h_1\geq 2$.
\end{case}

\begin{subcase}
$G_1$ contains a $u$-triangle.
\end{subcase}

Without loss of generality, we denote a $u$-triangle in
$G_1$ by $ux_{k}x_{k'}u$ where $k<k'$.

\begin{subsubcase}
$u$ is not adjacent to every vertex of $H_2$.
\end{subsubcase}
Let $y_i\in V(H_2)$ be the vertex such that $uy_i\notin E(G)$
and $i$ is as small as possible. Note that $\{u,x_k,x_{k'};y_{i-1},y_i\}$
induces a $Z_2$. By Claim \ref{cl1}, $y_{i-1}$ is super heavy.
So if $i=2$ then $\{u,y_1\}$ is a super-heavy pair such that
$d_{C}(u,y_1)=1$, a contradiction by Claim \ref{cl3}.
Therefore $i\geq 3$ and $uy_2\in E(G)$.

If there exists $t\in \{1,2,\ldots,h_1-1\}$ such that
$ux_t\in E(G)$ and $ux_{t+1}\notin E(G)$, then
$\{u,y_1,y_2;x_t,x_{t+1}\}$ induces a $Z_2$. Note
that $x_t$ is not super-heavy. Since $G$ is $Z_2$-$f_1$-heavy,
$y_1$ is super-heavy. Hence $\{u,y_1\}$ is a super-heavy pair
such that $d_C(u,y_1)=1$, contradicting to Claim \ref{cl3}.
Therefore, $u$ is adjacent to every vertex of $H_1$. Note that
$C'=ux_1\cdots x_iu$ is an $(i+1)$-cycle, where $2\leq i\leq h_1$,
and $G$ contains $[3,h_1+1]$-cycles. If $i=h_2$, then $u$ is
adjacent to every vertex of $H_2$ other than $y_{h_2}$.
It follows $G$ contains $[h_1+4,n]$-cycles. Furthermore,
$C'=ux_2\cdots x_{h_1}vy_{h_2}y_{h_2-1}u$ is an $(h_1+3)$-cycle.
If $h_1\geq 3$, then $C'=ux_3\cdots x_{h_1}vy_{h_2}y_{h_2-1}u$
is an $(h_1+2)$-cycle, and $G$ is pancyclic. If $h_1=2$ and
$h_2\geq 4$, then we can easily find a 4-cycle in $G$, and
$G$ is pancyclic. If $h_1=2$ and $h_2=2$ or 3, then $n=6$ or 7.
In these two cases, the result is easy to verify.

Now we suppose that $3\leq i\leq h_2-1$ and try to get a contradiction.
If there exists $y_k\in N_{G_2}(u)$ such that $y_ky_{i-2}\notin E(G)$
and $y_k\neq y_{i-2}$, then $\{u;x_1,y_k,y_{i-2}\}$ induces a claw.
Since $G$ is claw-$f_1$-heavy and $x_1$ is not super-heavy, $y_{i-2}$
is super-heavy. Therefore, $\{y_{i-2},y_{i-1}\}$ is a super-heavy pair
such that $d_C(y_{i-2},y_{i-1})=1$, a contradiction by Claim \ref{cl3}.
So, $N_{G_2}(u)\backslash \{y_{i-2}\}\subseteq N(y_{i-2})$.

If $uv\in E(G)$, then we set $G'=G-V(H_1)$. Since
$N(u)\cup \{u\}\backslash (V(H_1)\cup\{v,y_{i-2}\})\subseteq N(y_{i-2})$,
we have $d(y_{i-2})\geq d(u)+1-h_1-2\geq (n+1)/2-h_1-1$. Furthermore,
we obtain $d_{G'}(y_{i-2})+d_{G'}(y_{i-1})=d(y_{i-2})+d(y_{i-1})\geq n-h_1=|G'|$.
Let $C'=uvy_{h_2}\cdots y_1u$. Then $C'$ is a Hamilton cycle in $G'$ and
$d_{C'}(y_{i-2},y_{i-1})=1$. By Lemma \ref{le3}, $G'$ is either pancyclic,
bipartite, or missing only a $(|G'|-1)$-cycle. But $G'$ contains the
triangle $uy_1y_2u$, hence it is not bipartite. Note that $G$ contains
the cycle $C''=uvy_{h_2}\cdots y_2u$ of length $|G'|-1$. Hence $G'$ is
pancyclic, and this implies that $G$ contains $[3,|G'|]$-cycles.
Since $u$ is adjacent to every vertex of $H_1$, $G$ contains
$[|G'|+1,n]$-cycles. Hence $G$ is pancyclic.

If $uv\notin E(G)$, then we set $G'=G-(V(H_1)\backslash \{x_{h_1}\})$.
Now we have $d(y_{i-2})\geq d(u)-h_1-1+1\geq (n+1)/2-h_1$.
And we obtain $d_{G'}(y_{i-2})+d_{G'}(y_{i-1})\geq d(y_{i-2})+d(y_{i-1})\geq n+1-h_1=|G'|$.
Similarly, we can prove that $G$ is pancyclic.

\begin{subsubcase}
$u$ is adjacent to every vertex of $H_2$.
\end{subsubcase}
Note that $uy_1y_2u$ is a $u$-triangle. If there exists
a vertex $x_t\in V(H_1)$ such that $ux_t\in E(G)$ and $ux_{t+1}\notin E(G)$,
then $\{u,y_1,y_2;x_t,x_{t+1}\}$ induces a $Z_2$. This implies that
$y_1$ is super-heavy. Hence $\{u,y_1\}$ is a super-heavy pair such
that $d_C(u,y_1)=1$, a contradiction by Claim \ref{cl3}. If $u$
is adjacent to every vertex in $H_1$, then $u$ is adjacent to
all vertices of $V(G)\backslash \{u,v\}$. This implies that
$d(u)\geq n-2$, and $d(u)+d(y_1)\geq n$. By Lemma \ref{le3},
$G$ is either pancyclic, bipartite, or missing only an $(n-1)$-cycle.
Since $u$ is adjacent to every vertex in $H_2$, $G$ is neither
bipartite nor missing $(n-1)$-cycles. It follows that $G$ is
pancyclic.

\begin{subcase}
$G_1$ contains no $u$-triangles.
\end{subcase}

We first show that $N_{G_1}(u)=\{x_1\}$. Suppose not. If there
is a vertex $x\in N_{G_1}(u)$ such that $x\neq x_1$, then since
$G_1$ contains no $u$-triangles, we have $xx_1\notin E(G)$.
Now $\{u;x,x_1,y_1\}$ induces a claw. It follows that either
$x$ or $x_1$ is super-heavy, which contradicts to Claim \ref{cl1}.

If there exist two consecutive vertices, say $y_i,y_{i+1}\in V(H_2)$,
such that $uy_i,uy_{i+1}\in E(G)$, then $\{u,y_i,y_{i+1};x_1,x_2\}$
induce a $Z_2$. Hence $\{y_i,y_{i+1}\}$ is a super-heavy pair such
that $d_C(y_i,y_{i+1})=1$, a contradiction by Claim \ref{cl3}.

Therefore for any $y_i\in V(H_2)\setminus\{y_{h_2}\}$, $|\{uy_i,uy_{i+1}\}\cap E(G)|\leq 1$.
This implies that $u$ is adjacent to only one vertex $x_1$ in $H_1$
and at most $(h_1+1)/2$ vertices in $H_2$ and maybe adjacent to $v$
or not. Hence we have $(n+1)/2\leq d(u)\leq 1+1+(h_2+1)/2$. This
implies that $h_2\geq n-4$. Noting that $h_2=n-2-h_1\leq n-2-2=n-4$,
we have $h_2=n-4,h_1=2$, $uv\in E(G)$ and $N_{G_2}(u)=\{y_{2k+1}:k=0,1,\ldots,(n-5)/2\}$,
where $n$ is odd.

If $y_1y_3\notin E(G)$, then $\{u;x_1,y_1,y_3\}$ induces a claw.
Since $G$ is claw-$f_1$-heavy, $\{y_1,y_3\}$ is a super-heavy pair
such that $d_C(y_1,y_3)=2$. By Claim \ref{cl3}, a contradiction.

If $y_1y_3\in E(G)$, then $\{u,y_1,y_3;x_1,x_2\}$ induces a $Z_2$.
Since $G$ is $Z_2$-$f_1$-heavy, $\{y_1,y_3\}$ is a super-heavy pair
such that $d_{C}(y_1,y_3)=2$. By Claim \ref{cl3}, also a contradiction.

The proof is complete. {\hfill$\Box$}
\section*{Acknowledgements}
This work was supported by NSFC (No.~11271300) and the Doctorate
Foundation of Northwestern Polytechnical University (cx201326).
The author is very indebted to Professor E.F. Schmeichel for his
kindness and helpful documents. The author would like to thank Dr.
Jun Ge and Lu Qiao for many helpful discussions, and would like to
express deep gratitude to editors and reviewers for their carefully
reading the earlier version of this work and so many suggestions.


\begin{thebibliography}{10}
\bibitem{Bedrossian}
P.~Bedrossian, \emph{Forbidden Subgraph and Minimum Degree Conditons for
Hamiltonicity}, Ph.D. Thesis, Memphis State University, USA (1991).

\bibitem{Bedrossian_Chen_Schelp}
P. Bedrossian, G. Chen and R.H. Schelp, A generalization of Fan's
condition for Hamiltonicity, pancyclicity, and Hamiltonian connectedness,
\emph{Discrete Math.} {\bf 115} (1993), 39-50.

\bibitem{Benhocine_Wojda}
A. Benhocine and A.P. Wojda, The Geng-Hua Fan conditions for pancyclic
or Hamilton-connected graphs, \emph{J. Combin. Theory Ser. B} {\bf 42}
(1987), 167-180.

\bibitem{Bondy}
J.A. Bondy, Pancyclic graphs I, \emph{J. Combin. Theory Ser. B}
{\bf 11} (1971), 80-84.

\bibitem{Bondy_Murty}
J.A. Bondy and U.S.R. Murty, \emph{Graph Theory with Applications},
Macmillan London and Elsevier, New York (1976).

\bibitem{Fan}
G. Fan, New sufficient conditions for cycles in graphs,
\emph{J. Combin. Theory Ser. B} {\bf 37} (1984), 221-227.

\bibitem{Faudree_Gould}
R.J. Faudree and R.J. Gould, Characterizing forbidden
pairs for hamiltonian properties, \emph{Discrete Math.}
{\bf 173} (1997), 45-60.

\bibitem{Ferrara_Jacobson_Harris}
M. Ferrara, M.S. Jacobson and A. Harris, Cycle lengths in
Hamiltonian graphs with a pair of vertices having large
degree sum, \emph{Graphs and Combin.} {\bf 26} (2010),
215-223.

\bibitem{Ning_Zhang}
B. Ning and S. Zhang, Ore- and Fan-type heavy subgraphs
for Hamiltonicity of 2-connected graphs, \emph{Discrete Math.}
{\bf 313} (2013), 1715-1725.

\bibitem{Schmeichel_Hakimi}
E.F. Schmeichel and S.L. Hakimi, A cycle structure theorem
for hamiltonian graphs, \emph{J. Combin. Theorey Ser. B}
{\bf 45} (1988), 99-107.
\end{thebibliography}
\end{document}